\theoremstyle{plain}
\newtheorem{thrm}{Theorem}[section]
\newtheorem*{thrm*}{Theorem}
\newtheorem{lemma}[thrm]{Lemma}
\newtheorem{prop}[thrm]{Proposition}
\newtheorem{cor}[thrm]{Corollary}
\theoremstyle{definition}
\theoremstyle{remark}
\newtheorem{rmrk}[thrm]{Remark}
\numberwithin{equation}{section}
\begin{document}

\newcommand{\ddelta}{\delta}

\newcommand{\tx}{\tilde x}
\newcommand{\R}{\mathbb R}
\newcommand{\N}{\mathbb N}
\newcommand{\C}{\mathbb C}
\newcommand{\lie}{\mathcal G}
\newcommand{\hN}{\mathcal N}
\newcommand{\D}{\mathcal D}
\newcommand{\A}{\mathcal A}
\newcommand{\B}{\mathcal B}
\newcommand{\sL}{\mathcal L}
\newcommand{\sLi}{\mathcal L_{\infty}}

\newcommand{\G}{\Gamma}
\newcommand{\x}{\xi}

\newcommand{\eps}{\epsilon}
\newcommand{\al}{\alpha}
\newcommand{\be}{\beta}
\newcommand{\p}{\partial}  
\newcommand{\lig}{\mathfrak}

\def\dist{\mathop{\varrho}\nolimits}

\newcommand{\BCH}{\operatorname{BCH}\nolimits}
\newcommand{\Lip}{\operatorname{Lip}\nolimits}
\newcommand{\Hol}{C}                             
\newcommand{\lip}{\operatorname{lip}\nolimits}
\newcommand{\capQ}{\operatorname{Cap}\nolimits_Q}
\newcommand{\pCap}{\operatorname{Cap}\nolimits_p}
\newcommand{\Om}{\Omega}
\newcommand{\om}{\omega}
\newcommand{\half}{\frac{1}{2}}
\newcommand{\e}{\varepsilon}
\newcommand{\vn}{\vec{n}}
\newcommand{\X}{\Xi}
\newcommand{\tLip}{\tilde  Lip}

\newcommand{\Span}{\operatorname{span}}

\newcommand{\ad}{\operatorname{ad}}
\newcommand{\Hm}{\mathbb H^m}
\newcommand{\Hn}{\mathbb H^n}
\newcommand{\Hone}{\mathbb H^1}
\newcommand{\Lie}{\mathfrak}
\newcommand{\Layer}{V}
\newcommand{\hgrad}{\nabla_{\!H}}
\newcommand{\im}{\textbf{i}}
\newcommand{\nz}{\nabla_0}
\newcommand{\s}{\sigma}
\newcommand{\se}{\sigma_\e}

\newcommand{\ued}{u^{\e,\ddelta}}
\newcommand{\ueds}{u^{\e,\ddelta,\sigma}}
\newcommand{\tnabla}{\tilde{\nabla}}

\newcommand{\bx}{\bar x}
\newcommand{\by}{\bar y}
\newcommand{\bt}{\bar t}
\newcommand{\bs}{\bar s}
\newcommand{\bz}{\bar z}
\newcommand{\btau}{\bar \tau}

\newcommand{\LC}{\mbox{\boldmath $\nabla$}}
\newcommand{\Ne}{\mbox{\boldmath $n $}}
\newcommand{\nuo}{\mbox{\boldmath $n^0$}}
\newcommand{\nuu}{\mbox{\boldmath $n^1$}}
\newcommand{\nue}{\mbox{\boldmath $n $}}
\newcommand{\nuek}{\mbox{\boldmath $n^{\e_k}$}}
\newcommand{\dse}{\nabla^{H\Su, \e}}
\newcommand{\dso}{\nabla^{H\Su, 0}}
\newcommand{\tX}{\tilde X}

\newcommand\red{\textcolor{red}}
\newcommand\green{\textcolor{green}}

\newcommand{\Xie}{X psilon_i}
\newcommand{\Xje}{X psilon_j}
\newcommand{\Su}{\mathcal S}
\newcommand{\F}{\mathcal F}

\def\pa{\partial}
\def\Id{{\rm Id}\,}
\def\loc{{\rm loc}}
\def\div{{\rm div}\,}
\def\reg{{\rm reg}\,}
\def\regn{{\rm regn}\,}
\def\Cap{{\rm Cap}}

\title [Quasilinear equations in the Heisenberg group]{ 
Regularity theory of quasilinear elliptic and parabolic equations in the Heisenberg group}

\author[L. Capogna]{L. Capogna}
\address{Luca Capogna\\Department of Mathematics and Statistics, Smith College, Northampton, MA 01060, USA
}\email{lcapogna@smith.edu}

\author[G.  Citti]{G.  Citti}\address{Giovanna Citti\\Dipartimento di Matematica, Piazza Porta S. Donato 5, Universit\`a di Bologna, 
40126 Bologna, Italy;  Centro Linceo interdisciplinare Beniamino Segre, Accademia dei Lincei, Roma, Italy; CAMS, EHESS, Paris, France; GNAMPA INDAM} \email{giovanna.citti@unibo.it}

\author[X. Zhong]{X. Zhong}\address{Xiao Zhong\\Department of Mathematics and Statistics, University of Helsinki, 00014, University of Helsinki, Finland}
\email{xiao.x.zhong@helsinki.fi}
\keywords{Subelliptic $p$-Laplacian, Heisenberg group \\ MSC Classification: 35H20, 35K65, 35K92}

\thanks{LC was partially supported by NSF award  DMS1955992}
\thanks{GC was partially funded by  
Horizon 2020 Project ref. 777822: GHAIA}
\thanks{XZ was supported by the Academy of Finland, Centre of Excellence in Randomness and Structures.}

\begin{abstract}
This note provides a succinct survey of the existing literature concerning the H\"older regularity for the gradient of weak solutions of PDEs of the form 
$$\sum_{i=1}^{2n} X_i A_i(\nabla_0 u)=0 \text{ and } \partial_t u= \sum_{i=1}^{2n} X_i A_i(\nabla_0 u)$$ modeled on the   $p$-Laplacian in a domain  $\Omega$  in the Heisenberg group $\Hn$, with   $1\le p <\infty$, and of its parabolic counterpart. We present some open problems and outline some of the difficulties they present.
 \end{abstract}

\noindent{\it Review article:}

\maketitle

\tableofcontents

\section{Introduction and review of the literature}

If $\Omega\subset \R^n$, is an open set and $u:\Om\to \mathbb R$ is a sufficiently smooth function, the critical points of the  nonlinear  Dirichlet  energy ($p-$energy)
$$E_p(u,\Om)=\int_{\Omega} |\nabla u|^p ,$$
for $1<p<\infty$ are called $p-$harmonic functions, and are weak solutions of the Euler-Lagrange equation
\begin{equation}\label{euclidean}\sum_{i=1}^n \p_i (|\nabla u|^{p-2} \p_i u)=0\end{equation}
in $\Omega$. The $p-$energy plays a pivotal role in the study of nonlinear potential theory, through the notion of capacity, and as such it appears in many problems in analysis, geometry and in applied mathematics. 
The regularity of weak solutions of \eqref{euclidean} has a long history.  Since this is a quasilinear, degenerate elliptic PDE, the linear theory developed by De Giorgi \cite{De}, Nash \cite{Nash} and Moser \cite{moser} does not directly apply. An Harnack inequality for positive weak solutions of \eqref{euclidean}, and the ensuing H\"older regularity was established by Serrin in 1964 \cite{Se}, through ideas introduced earlier by Moser. The H\"older regularity of the gradient is more involved. The range $2\le p$ was studied by Uraltseva \cite{Ur} and by Uhlenbeck \cite{U}. The general $1<p<\infty$ case is due to the work of Di Benedetto \cite{Di}, Lewis \cite{Le} and Tolksdorff \cite{To}.  The (degenerate) parabolc counterpart
has been studied by many authors, and we refer to the book by Di Benedetto \cite{DB} and the more recent paper \cite{DBGV} for an extensive list of references. 

Moving beyond the Euclidean setting, the $p-$energy and $p-$harmonic functions continue to play a crucial role in the study of nonlinear potential theory, geometric function theory and geometry. For instance, in   Mostow rigidity theorem \cite{Mostow}, and its extensions by Pansu \cite{Pansu}, the $p-$energy and $p-$capacity of rings in the subriemannian Heisenberg group $\mathbb H^n$ appear naturally in the study of rigidity of homeomorphisms of the complex hyperbolic  space $H\mathbb C^{n+1}$. 

This survey is meant to provide a succinct introduction to the  the regularity theory for $p-$harmonic functions in the Heisenberg group, and for their parabolic counterpart. We aim at presenting the material roughly in  chronological order and to emphasize both the challenges and some of the ideas that have provided breakthroughs. 

\bigskip

The Heisenberg group $\mathbb H^n$ is a nilpotent Lie group. Its Lie algebra  $\mathfrak h$ can be identified with $\R^{2n+1}$, and admits a stratification $\mathfrak h= H\oplus V$, where $H$ and $V$ are respectively the  ($2n-$dimensional) horizontal and ($1-$dimensional) vertical tangent bundles, with $[H,H]=V,$  and $V$ is  the center of the algebra.  This space has a canonical left invariant measure $dx$ the Haar measure, which for nilpotent groups is the Lebsgue measure in $\mathfrak h$. Absolutely continuous curves $\gamma:[0,1]\to \mathbb H^n$ are called {\it horizontal} if their tangent is in $H$ a.e. with respect to Lebesgue measure in $\R$

 A subriemannian metric in $\mathbb H^n$ is a left-invariant Riemannian metric $g_0$ defined in $H$. The corresponding Carnot-Carath\'eodory control distance $d_0(x,y)$ is the shortest time it takes to travel from $x$ to $y$ along unit speed horizontal curves joining the two points. The triplet $(\mathbb H^n, d_0, dx)$ is a $Q-$Ahlfors regular metric measure space, with $Q=2n+2$ denoting the Hausdorff dimension of the group.  
 
 The  differential structure,  can be described in terms of a $g_0-$orthonormal frame $X_1,...,X_{2n}$ of $H$, and a 
 vector $Z\in V$, such that $[X_i, X_{i+n}]=Z$ for $i=1,..,n$ and all other commutators vanish. In exponential coordinates $(x_1,...,x_{2n},x_{2n+1})$, one can write these vector fields as $Z=\p_{x_{2n+1}}$,
 $$X_i= \p_{x_i} + \frac{x_{i+n}}{2}\p_{x_{2n+1}} \text{ for } i=1,...,n \quad \text{ and } X_i= \p_{x_i} - \frac{x_{i-n}}{2}\p_{x_{2n+1}} \text{ for } i=n+1,...,2n .$$
 For any domain $\Omega\subset \mathbb H^n$, and $1< p <\infty$, the horizontal  $p-$energy of a Lipschtiz function $u:\Omega\to \R$ is 
 $$E_p(u,\Omega)=\int_\Omega|\nabla_0 u|^p= \int_\Omega \bigg(\sum_{i=1}^{2n} (X_i u)^2 \bigg)^\frac{p}{2},$$
 where we have denoted by $\nabla_0 u$ the $2n-$dimensional vector field with components $X_1 u, ...., X_{2n} u$ .
 The subriemannian Heisenberg group admits a Poincar\'e inequality (see \cite{VCSC}, \cite{jerison})
 \begin{equation}\label{poincare}
 \int_B |u-u_B|  \le C\int_B |\nabla_0 u| ,
 \end{equation}
 where $B=B(x_0, R)=\{y \in \mathbb H^n \text{ such that }d_0(x_0,y)<R\}$ is a Carnot-Carath\'eodory metric ball centered in $x_0$ with radius $R>0$.
 The Ahlfors regularity and \eqref{poincare} yield as consequence all the necessary  embedding estimates for the horizontal Sobolev spaces
 $$HW^{1,p}(\Omega) = \{f\in L^p(\Omega) \text{ such that } |\nabla_0 f|\in L^p(\Omega)\},$$ with $1\le p <\infty$ and $\Omega\subset \mathbb H^n$. For more details we refer the reader to  \cite{HK,CDPT}, and here we only emphasize that membership in the horizontal Sobolev spaces does not guarantee the existence of weak derivatives along the center of the group. 

\bigskip

The Euler-Lagrange equation for the horizontal $p-$energy gives rise to the {\it subelliptic $p-$Laplacian }
\begin{equation}\label{plap} 
L_p u :=\sum_{i=1}^{2n} X_i  \Bigg( |\nabla_0 u|^{p-2} X_i u \Bigg)=0. \end{equation}

In this survey we will also necessarily consider a more general class of equations based on \eqref{plap},

\begin{equation}\label{maineq}
\sum_{i=1}^{2n} X_i A_i(\nabla_0 u)=0 
\text{ in }\Omega,
\end{equation}
where there exist $\ddelta\ge 0$, $\lambda',\Lambda'>0$ such that for any $\xi, \eta \in \R^{2n}$
\begin{equation}\label{structure-zero}
\begin{cases}
 \lambda' (\ddelta+|\xi|^2)^{\frac{p-2}{2}} |\eta|^2 \le  \sum_{i,j=1}^{2n} \p_{\xi_j} A_i(\xi) \eta_i \eta_j \le \Lambda' (\ddelta+|\xi|^2)^{\frac{p-2}{2}} |\eta|^2,
 \\ 
|A_i(\xi)| +  |\p_{x_j} A_i(\xi)| \le  \Lambda' |\xi|^{p-1}.
\end{cases}
\end{equation}
We will also consider weak solutions of the parabolic counterpart of \eqref{maineq},
\begin{equation}\label{maineq-zero}
\p_t u = \sum_{i=1}^{2n} X_i A_i(\nabla_0 u),
\end{equation}
in a cylinder $Q=\Om\times(0,T)$.

Both \eqref{maineq} and \eqref{maineq-zero} need to be interpreted in a weak sense. Specifically, a function $u\in HW^{1,p}_{loc} (\Omega)$ is a weak solution of \eqref{maineq} if for every test function  $\phi \in C^\infty_0 (\Omega)$ one has 
$$\int_\Omega A_i(\nabla_0 u)X_i \phi =0.$$
Likewise, for $T>0$, a function $u \in L^p([0,T], HW^{1,p}_{loc} (\Omega))$ is a weak solution of \eqref{maineq-zero} if for every test function  $\phi \in C^\infty_0 (\Omega \times [0,T])$ one has 
$$ \int_0^T \int_{\Omega} u \p_t \phi   = \int_0^T \int_\Omega A_i(\nabla_0 u)X_i \phi .$$
Thanks to the doubling property of the Haar measure and the Poincar\'e inequality, positive weak solutions of \eqref{maineq-zero} and of \eqref{maineq} satisfy a Harnack inequality, and consequently are H\"older continuous (see \cite{ ACCN}). As in the Euclidean case, the H\"older regularity of the horizontal gradient  of solutions $\nabla_0 u$ is more involved. In addition to the degeneracy of ellipticity which comes from the presence of the nonlinear term, one has a novel challenge which is purely non-Euclidean: Formally differentiating \eqref{maineq} along a given horizontal vector field $X_k$ and setting $w=X_k u$, yields
\begin{equation}\label{formal-diff}
\sum_{i=1,j}^{2n} X_i A_{i,\xi_j}(\nabla_0 u) X_j w + \sum_{i=1}^{2n} [X_k,X_i] A_i (\nabla_0 u) +  \sum_{i=1,j}^{2n} X_i A_{i,\xi_j}(\nabla_0 u) [X_k, X_j] u =0 .
\end{equation}
The non-commutativity of the horizontal vector fields gives rise to terms containing derivatives of the weak solution along the center of the group, i.e. $Zu$, which in principle may not even exist. 
In the literature there are essentially two methods to  address this new difficulty and make this informal derivation rigorous: The first is to use fractional differential quotients, coupled with H\"ormander observation that differentiability along the horizontal directions yield essentially the existence of a half derivative along the commutators (see \cite{Ho,Ca1}. The second method is that of {\it vanishing viscosity solutions}\footnote{Although the limit of such solutions is indeed a viscosity solution of the limit equation, aside from the name our work does not consider at all viscosity solutions}, and it consists in approximating the equation with a regularized version, for which the Euclidean regularity applies. This is done through 
 an approximation of the subriemannian structure $(\mathbb H^n, g_0)$ with a sequence of Riemannian metrics $g_\e$ so that, when $\mathbb H^n$ is endowed with the corresponding distance functions $d_\e$, it  converges in the Gromov-Hausdorff sense to the metric space $(\mathbb H^n, d_0)$. One can show that the critical points of the $g_\e-$energies 
$$\int_{\Om} |\nabla_\e u|^p $$
converge in a suitable sense to $p-$harmonic functions for the subelliptic p-Laplacian. Since the Euclidean theory applies when $\e>0$, one then can differentiate the approximating equation without worrying about the derivatives in the direction of the center. This second approach allows for a more streamlined exposition and we will use it to describe the whole development of the regularity theory in this note. It is  is described in detail in Section \ref{riemannian-approx}.

\bigskip
The current status for the interior regularity theory can be summarized in the following theorems.

\begin{thrm}[Stationary case]\label{main-th-stationary}
Let $A_i$ satisfy the structure conditions \eqref{structure-zero} with $\delta\ge 0$, and $1<p<\infty$.
If $u\in  HW_{\loc}^{1,p}(\Om))$ is a weak solution of \eqref{maineq}  in $\Om$, then for every metric ball $B=B(x_0,r)$ with  $3B\subset  \Om$, there exist constants $\alpha\in (0,1)$, and $C=C(n,p,\lambda, \Lambda)>0$ depending on the structure conditions, such that 
\[
||\nabla_0 u||_{C^{\alpha}(B)} \le C r^{-\alpha-Q/p} \bigg(\int_{2B}(\delta+|\nabla_0u|^2)^{\frac{p}{2}} \bigg)^{\frac{1}{p}},
\]
where the H\"older regularity is measured in terms of the Carnot-Carath\'eodory distance $d_0$.
\end{thrm}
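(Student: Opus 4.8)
The plan is to follow the vanishing-viscosity (Riemannian approximation) scheme outlined above and described in Section~\ref{riemannian-approx}, since for each fixed approximation parameter it reduces the subelliptic problem to a nondegenerate Euclidean one, to which the classical $C^{1,\alpha}$ theory of \cite{Di,Le,To,U,Ur} applies. First I would complete the horizontal frame $X_1,\dots,X_{2n}$ to a genuine Riemannian frame $X_1,\dots,X_{2n},X_0^\e:=\e Z$, with associated gradient $\nabla_\e u=(X_1u,\dots,X_{2n}u,\e Zu)$, and solve the regularized Dirichlet problem for $\sum_{i=0}^{2n} X_i^\e A_i^\e(\nabla_\e u)$ on $2B$ with boundary datum $u$, where $A_i^\e$ is built from $A_i$ and, if necessary, $\delta$ is replaced by a strictly positive $\delta+\sigma$ to guarantee uniform ellipticity. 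For fixed $\e>0$ the frame spans the whole tangent space, the equation is uniformly elliptic in the Euclidean sense, and the solutions $u^\e$ are smooth; the difficulty is that the constants furnished by the Euclidean theory degenerate as $\e\to0$.

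The entire task is therefore to produce a priori estimates for $u^\e$ that are \emph{uniform in $\e$} (and in the regularization), and then to pass to the limit. Differentiating the approximating equation along a horizontal field $X_k$ and setting $w=X_ku^\e$ produces, exactly as in \eqref{formal-diff}, an equation for $w$ whose source terms involve the commutators $[X_k,X_i]$ and hence the vertical derivative $Zu^\e$. I would first derive, by testing with $\eta^2(\delta+|\nabla_\e u^\e|^2)^{\beta}X_ku^\e$ and summing over $k$, a Caccioppoli inequality bounding the weighted horizontal Hessian $\int \eta^2(\delta+|\nabla_\e u^\e|^2)^{(p-2)/2}|X_jX_ku^\e|^2$. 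Because $Z=[X_i,X_{i+n}]$, the vertical derivative is itself a commutator of horizontal ones, so the same test functions, together with an integration by parts that moves the commutator onto the weight, should yield a companion estimate controlling $\int\eta^2(\delta+|\nabla_\e u^\e|^2)^{(p-2)/2}|Zu^\e|^2$ by horizontal quantities, with constants independent of $\e$.

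The main obstacle is the uniform local Lipschitz bound, i.e. $\|\nabla_\e u^\e\|_{L^\infty(B)}\le C r^{-Q/p}\big(\int_{2B}(\delta+|\nabla_\e u^\e|^2)^{p/2}\big)^{1/p}$ with $C$ independent of $\e$. In the subelliptic setting the vertical-derivative terms generated by the non-commutativity carry no favorable sign and cannot be discarded, so the delicate point is to absorb them. This is achieved through a carefully weighted Caccioppoli inequality, valid on the whole range $1<p<\infty$, in which the loss of one power of the weight in the vertical estimate is exactly compensated by the gain coming from the horizontal second-order estimate; feeding this into a Moser/De Giorgi iteration on the level sets of $|\nabla_\e u^\e|$ then yields a sup bound independent of $\e$. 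I expect this absorption, namely controlling $Zu^\e$ despite the absence of ellipticity in the vertical direction, to be the crux of the whole argument, and historically it is precisely the step that restricted earlier results to $p$ near $2$.

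Once $\nabla_\e u^\e$ is locally bounded uniformly in $\e$, the differentiated equation for $w=X_ku^\e$ becomes uniformly (though degenerately) elliptic on the region where the gradient is bounded, and I would run a DiBenedetto-type oscillation-decay argument, splitting into the degenerate regime (where $|\nabla_\e u^\e|$ is small relative to its oscillation) and the nondegenerate regime (where the equation behaves like a uniformly elliptic linear one and a De Giorgi/Harnack argument applies), to obtain uniform decay of the oscillation of $\nabla_\e u^\e$ on shrinking balls, hence a uniform $C^\alpha$ estimate in the $d_0$ metric with the stated scaling $r^{-\alpha-Q/p}$. Finally, the uniform bounds let me pass to the limit: $u^\e\to u$ and $\nabla_\e u^\e\to\nabla_0 u$ in $L^p_{\loc}$, the limit is identified with the given weak solution (by uniqueness for the Dirichlet datum, or by a density argument after first removing the $\sigma$-regularization), and the $C^\alpha$ bound is preserved under this convergence, yielding the theorem.
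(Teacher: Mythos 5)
Your outline reproduces the paper's overall strategy (Riemannian approximation, Caccioppoli inequality for the horizontal Hessian, elimination of the $Zu^\e$ term, Moser iteration, oscillation decay, passage to the limit), but the two steps that actually carry the theorem are left as hopes rather than arguments, and the mechanisms you propose for them are not the ones that work. For the control of $Zu^\e$ you suggest that ``the same test functions, together with an integration by parts that moves the commutator onto the weight'' will bound the vertical term by horizontal quantities. This is precisely the step that blocked all earlier attempts, and it does not follow from the test functions used for the horizontal Caccioppoli inequality. The paper's proof rests on two specific inputs you never invoke: first, that $Zu^\e$ is itself a weak solution of the linearized equation \eqref{eqderivZ} (because $Z$ commutes with the horizontal frame), which gives it its own Caccioppoli inequality (Lemma \ref{stimaZu}); second, either the interpolation inequality of Lemma \ref{SobXU}, obtained by writing $|Zu|^{p+\beta}=|Zu|^{p-2+\beta}Zu\,(X_lX_{n+l}u-X_{n+l}X_lu)$ and integrating by parts --- whose Young-inequality exponents force the restriction $2\le p\le 4$ --- or, for the full range $1<p<\infty$, the reverse H\"older inequality of Lemma \ref{lemma:rev}, which requires the non-standard test function $\varphi=\eta^{\beta+2}|Zu^\e|^\beta X_l^\e u^\e$ and an iteration in $\beta$. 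Your ``exact compensation of one power of the weight'' does not describe either mechanism, so the uniform Lipschitz bound is not established by your argument.

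The second gap is in the H\"older step. You propose a DiBenedetto-type degenerate/nondegenerate alternative for the oscillation of $\nabla_\e u^\e$ and claim it covers $1<p<\infty$. The paper states explicitly that the De Giorgi-class Caccioppoli inequality \eqref{cacci:Xu:k} underlying that argument is only known for $p>2$ in the Heisenberg group, and that the analogue of DiBenedetto's singular-range version is open here. The full range is instead handled by the Tolksdorff--Lieberman double truncation $v=\min\big(\nu(r)/8,\max(\nu(r)/4-X_lu,0)\big)$ and the Caccioppoli inequality \eqref{cacci:Xu:v}, again fed by the integrability estimate for $Zu$ in Corollary \ref{cor:Zu:inter}. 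As written, your proof would at best yield the theorem for $2\le p\le 4$ (modulo the first gap), not for $1<p<\infty$.
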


To state the pertinent results in the parabolic regime we define the cylinders
$$ Q_{\mu, r} : = B(x, r) \times [t_0 - \mu r, t_0]. $$
The introduction of these anisotropic balls is due to DiBenedetto \cite{DB} in the Euclidean setting.  In these
 non homogeneous spheres,  the ratio between the 
spatial and temporal dimension 
compensates for the lack of homogeneity of the equation.

\begin{thrm}[Parabolic case]\label{main-th-parabolic} 
Let $A_i$ satisfy the structure conditions \eqref{structure-zero} with $\delta=0$, and 
let $u\in L^p((0,T), HW_{\loc}^{1,p}(\Om))$ be a weak solution of \eqref{maineq-zero}  in $Q=\Om\times (0,T)$.  If $\ 2\le p \le 4$ then $|\nabla_0 u| \in L^\infty_{\loc}(Q)$ and $\partial_t u, Zu \in L^q_{\loc}(Q)$ for every $1\le q<\infty$. Moreover, one has
that  for any $Q_{\mu, 2r}\subset Q$, in the range $2<p\le 4$ one has 
\begin{equation}\label{desired-0}
\sup_{Q_{\mu, r}}  |\nabla_0u|
\le C  \max\Big( \Big(\frac{1}{\mu r^{N+2}}\int\int_{Q_{\mu,2r}} |\nabla_0u|^p  \Big)^{\frac{1}{2}}, \mu^{\frac{p}{2(2-p)}}\Big),
\end{equation}
where $C=C(n,p,\lambda, \Lambda, r,\mu)>0$. Note that the special case $p=2$ has been studied before using linear techniques, and the estimates we obtain in that case are similar to \eqref{desired-0}, with the anisotropic cylinders being replaced with the classic parabolic cylinders. 
\end{thrm}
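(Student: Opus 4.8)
The plan is to follow the \emph{vanishing viscosity} scheme described in the introduction and detailed in Section~\ref{riemannian-approx}, reducing \eqref{maineq-zero} to a priori estimates for smooth solutions of a regularized problem and then passing to the limit. For $\e>0$ and a degeneracy parameter $\ddelta>0$ one completes the horizontal frame $X_1,\dots,X_{2n}$ to a genuine Riemannian frame by adjoining $\e Z$, so that the resulting vector fields span the whole tangent space of $\R^{2n+1}$, and replaces \eqref{maineq-zero} by its regularized counterpart $\p_t\ued=\sum_{i=1}^{2n+1}X_i^\e A_i^\e(\nabla_\e\ued)$, a quasilinear equation of $p$-Laplacian type that is uniformly parabolic for fixed $\e,\ddelta$. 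The Euclidean parabolic theory of DiBenedetto \cite{DB} then yields that $\ued$ is smooth with locally bounded gradient, so the formal differentiation \eqref{formal-diff} becomes rigorous. The entire difficulty lies in producing Caccioppoli and sup-estimates for $\nabla_\e\ued$ with constants \emph{independent of $\e$ and $\ddelta$}; uniform energy bounds then allow one to extract a limit, which is identified with the weak solution $u$ using the Gromov--Hausdorff convergence $d_\e\to d_0$, and the estimates persist in the limit.

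The second step is to differentiate the regularized equation. Applying $X_k$ and setting $w=X_k\ued$ gives, as in \eqref{formal-diff}, a linear equation for $w$ whose coefficients are controlled through \eqref{structure-zero} by $(\ddelta+|\nabla_\e\ued|^2)^{(p-2)/2}$; the non-commutativity of the frame produces source terms in which the commutators $[X_k,X_i]$, all multiples of $Z$, force the appearance of the center derivative $Z\ued$. Testing with $\eta^2 w(\ddelta+|\nabla_\e\ued|^2)^{\beta}$ for suitable cut-offs $\eta$ and exponents $\beta$ produces a reverse-Poincar\'e inequality bounding the weighted horizontal second-order energy $\int\int\eta^2(\ddelta+|\nabla_\e\ued|^2)^{(p-2)/2}|\nabla_\e w|^2$, together with the parabolic time term, at the cost of the $Z\ued$ contributions. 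Since $Z$ is central it commutes with every $X_i$, so applying $Z$ to the equation is legitimate for $\e>0$; this yields a second Caccioppoli inequality, now for $Z\ued$, of the same scaling.

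The crux, and what I expect to be the main obstacle, is to close these estimates by absorbing the center-derivative terms. The $Z\ued$ sources in the horizontal inequality are estimated by the Caccioppoli inequality for $Z\ued$, but the latter itself regenerates horizontal-gradient terms through the commutators, so the two families must be coupled into a single system. A careful accounting of the powers of $(\ddelta+|\nabla_\e\ued|^2)$ shows that this system is self-improving precisely when $2\le p\le 4$: in that range the vertical energy carried by $Z\ued$ can be dominated by the horizontal energy, whereas for $p>4$ the balance is lost. Keeping every constant independent of $\e$ and $\ddelta$ throughout this balancing is by far the most delicate part of the argument and is what confines the theorem to the stated range.

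With the uniform integral estimates in hand, the final step is a De~Giorgi--Moser iteration carried out in DiBenedetto's \emph{intrinsic geometry}, on the anisotropic cylinders $Q_{\mu,r}$. One chooses the scaling parameter $\mu$ so that on $Q_{\mu,r}$ the weight $(\ddelta+|\nabla_\e\ued|^2)^{(p-2)/2}$ is essentially constant and comparable to the size of the gradient, which renders the rescaled equation uniformly parabolic and makes the iteration homogeneous. The scheme produces a bound on $|\nabla_0\ued|$ in $L^\infty_{\loc}$ uniform in $\e,\ddelta$, and its characteristic dichotomy---either $|\nabla_0 u|$ is controlled by its $L^p$ average on $Q_{\mu,2r}$, or one is forced down to the scale-invariant floor---is exactly what produces the $\max$ and the exponent $\mu^{p/(2(2-p))}$ in \eqref{desired-0}. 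Passing to the limit transfers the estimate to $u$. Finally, a bootstrap of the vertical Caccioppoli inequality, now with $|\nabla_0 u|$ bounded, gives $Zu\in L^q_{\loc}(Q)$ for every finite $q$, and the equation then yields $\p_t u=\sum_i X_i A_i(\nabla_0 u)\in L^q_{\loc}(Q)$ as well, completing the proof.
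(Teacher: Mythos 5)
Your overall architecture --- Riemannian approximation with parameters $\e,\delta$, differentiation of the regularized equation, a Caccioppoli inequality for the horizontal derivatives polluted by $Z\ued$ terms, a companion estimate for $Z\ued$, and a final iteration on DiBenedetto-type intrinsic cylinders $Q_{\mu,r}$ --- is exactly the paper's. But at the step you yourself identify as the crux, the proposal asserts rather than supplies the idea that makes the argument work. You claim that the horizontal and vertical Caccioppoli inequalities ``couple into a single system'' which ``a careful accounting of the powers'' shows to be self-improving for $2\le p\le 4$. This is not how the estimates close, and as stated the system does not close: the Caccioppoli inequality for $Z\ued$ (Lemma~\ref{stimaZu} and its parabolic analogue) bounds the weighted energy of $\nabla_\e Z\ued$ by a term that still contains $|Z\ued|^{\beta+2}$ on the right-hand side, so iterating the two inequalities against each other merely regenerates the vertical term at a higher power and produces no gain. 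The missing ingredient is the interpolation inequality of Lemma~\ref{SobXU} (estimate \eqref{interpolation}), which substitutes for the Sobolev inequality in the Moser scheme for $Z\ued$: one writes $Zu=X_lX_{n+l}u-X_{n+l}X_lu$, expands $|Zu|^{p+\beta}=|Zu|^{p-2+\beta}Zu\,(X_lX_{n+l}u-X_{n+l}X_lu)$, integrates by parts, and applies Young's inequality. It is precisely the Young exponent $2(p+\beta)/(4-p)$ in that step that forces $p\le 4$; your attribution of the range restriction to a balance of ``vertical versus horizontal energy'' in the coupled Caccioppoli system misplaces where the constraint actually enters. Combining \eqref{interpolation} with the vertical Caccioppoli inequality yields Lemma~\ref{mainlemmap}, the genuine gain of differentiability $\|Z\ued\|_{L^{p+\beta}}\lesssim \|(\delta+|\nabla_\e\ued|^2)^{1/2}\|_{L^{p+\beta}}$ plus lower-order parabolic terms, and only then can the $Z\ued$ contribution be absorbed into Lemma~\ref{lemma:Cacci2} to produce the homogeneous estimate of Proposition~\ref{cor1p}.

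A secondary omission: in the parabolic setting the time-derivative terms destroy the homogeneity of the resulting Caccioppoli inequality (this is the second right-hand-side term in \eqref{mainestimate} and \eqref{maincacci}, with the exponent $\tfrac{4-p}{2(p+\beta)}$ rather than $\tfrac{1}{p+\beta}$). You correctly anticipate that the intrinsic cylinders and the $\max(\cdot,\mu^{p/(2(2-p))})$ structure of \eqref{desired-0} compensate for this, but the mechanism by which the inhomogeneous terms are tracked through the iteration is part of what must be proved, not a consequence of choosing $\mu$ so that the weight is ``essentially constant''; the paper handles it by carrying the $\|\eta\partial_t\eta\|_{L^\infty}$ terms explicitly through Lemma~\ref{mainlemmap} and Proposition~\ref{cor1p} and then running a modified Moser iteration. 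Your concluding bootstrap for $Zu,\partial_tu\in L^q_{\loc}$ is fine once the Lipschitz bound is in hand.
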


In the special case where $\delta>0$ and the vanishing of the gradient does not present any degeneracy one has a stronger regularity result
\begin{thrm}[Non-degenerate parabolic case]\label{main1}  
Let $A_i$ satisfy the structure conditions \eqref{structure-zero} for some $p\ge 2$ and $\ddelta>0$. 
Let $u\in L^p((0,T), HW_0^{1,p}(\Om))$ be a weak solution of \eqref{maineq-zero}  in $Q=\Om\times (0,T)$.
 For any metric ball $B=B(x,r)\subset \subset \Om$ and $T>t_2\ge t_1\ge 0$, 
there exist  constants $C=C(n, p, \lambda, \Lambda, d(B, \p\Om), T-t_2, \ddelta)>0$  and 
$\al=\al (n, p, \lambda, \Lambda, d(B, \p\Om), T-t_2, \ddelta, r) \in (0,1)$ such that
\begin{equation}\label{c1alpha}
||\nabla_0 u ||_{C^{\alpha} (B\times (t_1,t_2) )} + ||Zu||_{C^{\alpha} (B\times (t_1,t_2) )}\le C
\bigg(\int_0^T \int_\Om (\ddelta+|\nabla_0 u|^2)^{\frac{p}{2}}  \bigg)^{\frac{1}{p}} .
\end{equation}
Moreover, if $A_i$ are smooth, then the solution is smooth as well.
\end{thrm}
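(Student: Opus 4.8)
The plan is to prove Theorem \ref{main1} via the Riemannian (vanishing viscosity) approximation outlined in Section \ref{riemannian-approx}, exploiting crucially that the center $Z$ commutes with every horizontal field $X_i$ and that the hypotheses $\ddelta>0$, $p\ge 2$ render the relevant equations uniformly (non-degenerately) parabolic. First I would replace the subriemannian frame by the Riemannian frame associated to $g_\e$ and let $u^\e$ denote the solution of the regularized version of \eqref{maineq-zero}, with the same initial and boundary data. For each fixed $\e>0$ the structure conditions \eqref{structure-zero} with $\ddelta>0$ make this a uniformly parabolic quasilinear equation in the Euclidean sense, so by the classical theory (DiBenedetto \cite{DB}, Ladyzhenskaya--Solonnikov--Uraltseva) $u^\e$ is $C^{1,\alpha}$, and $C^\infty$ when the $A_i$ are smooth. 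The entire difficulty is therefore to produce a priori estimates that are \emph{uniform in $\e$}, and then to pass to the limit.

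The heart of the argument is the control of the vertical derivative. Since $[Z,X_i]=0$ and $Z$ commutes with $\p_t$, differentiating the regularized equation along $Z$ and setting $v^\e=Zu^\e$ produces the clean linear equation
\[
\p_t v^\e=\sum_{i,j=1}^{2n} X_i\big(A_{i,\xi_j}(\nabla_0 u^\e)\,X_j v^\e\big),
\]
with no commutator corrections. In parallel, differentiating along a horizontal $X_k$ and setting $w^\e=X_k u^\e$ gives, as in \eqref{formal-diff}, the same principal operator but with forcing terms built from the commutators $[X_k,X_i]=\pm Z$; these involve precisely $v^\e=Zu^\e$ and $\nabla_0 v^\e$. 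Thus the horizontal and vertical second derivatives are coupled, and I would estimate them simultaneously. Because $\ddelta>0$ and $p\ge 2$, the coefficient matrix $A_{i,\xi_j}(\nabla_0 u^\e)$ is uniformly elliptic with bounds depending on $\|\nabla_0 u^\e\|_\infty$ but not on $\e$. A combined Caccioppoli inequality for the full gradient $\nabla_\e u^\e$ --- treating $X_1u^\e,\dots,X_{2n}u^\e$ and $Zu^\e$ together and absorbing the commutator terms into the good quadratic part --- followed by a Moser/De Giorgi iteration, then yields a local $L^\infty$ bound on $\nabla_\e u^\e$, and hence on $Zu^\e$, uniform in $\e$ and controlled by the energy on the right of \eqref{c1alpha}.

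With the full gradient uniformly bounded, the two differentiated equations have bounded measurable, uniformly parabolic coefficients, so the De Giorgi--Nash--Moser theory (applicable here because $\ddelta>0$ removes the degeneracy) delivers uniform interior $C^\alpha$ estimates for each $w^\e=X_ku^\e$ and for $v^\e=Zu^\e$. Passing to the limit $\e\to 0$ by Ascoli--Arzel\`a identifies the limit with $u$ and transfers the bounds, giving \eqref{c1alpha} for $\nabla_0 u$ and $Zu$ in the Carnot-Carath\'eodory metric. The smoothness assertion then follows by a Schauder bootstrap: once $\nabla_0 u,Zu\in C^\alpha$ the coefficients $A_{i,\xi_j}(\nabla_0 u)$ are H\"older continuous, so the linear equations for $X_ku$ and $Zu$ are subelliptic with H\"older coefficients, and combining subelliptic Schauder estimates with the hypoellipticity of the H\"ormander system $X_1,\dots,X_{2n}$ (the $X_i$ together with $Z=[X_i,X_{i+n}]$ span, cf. \cite{Ho,Ca1}) upgrades the regularity one order at a time up to $C^\infty$.

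I expect the main obstacle to be the uniform-in-$\e$ gradient bound of the second step: the commutator forcing couples $\nabla_0 u$ to both $Zu$ and $\nabla_0 Zu$, and one must run the Caccioppoli and iteration scheme so that the constants remain bounded as $\e\to 0$ and as the ellipticity is measured in the collapsing Riemannian metrics $g_\e$. The non-degeneracy $\ddelta>0$ together with $p\ge 2$ is exactly what keeps the problem uniformly parabolic and makes this balancing possible; in the degenerate case $\ddelta=0$ this step fails, which is why the companion result Theorem \ref{main-th-parabolic} is restricted in range and yields only the weaker conclusion \eqref{desired-0}.
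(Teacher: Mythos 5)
Your overall architecture coincides with the paper's: Riemannian approximation, the clean (commutator-free) linear equation for $Zu^\e$ coming from $[Z,X_i]=0$, the forced equation for $X_ku^\e$ as in \eqref{formal-diff}, a uniform-in-$\e$ gradient bound, then linear parabolic De Giorgi--Nash--Moser for the differentiated equations and a Schauder bootstrap for smoothness. The gap is in the one step that carries all the content: the uniform Lipschitz bound. You propose to obtain it by a ``combined Caccioppoli inequality for the full gradient \dots absorbing the commutator terms into the good quadratic part.'' This absorption does not work. The bad term in the Caccioppoli inequality for the horizontal derivatives (Lemma \ref{lemma3.4}) is
\[
C(\beta+1)^4\int\!\!\int \eta^2\,(\ddelta+|\nabla_\e u^\e|^2)^{\frac{p-2+\beta}{2}}|Zu^\e|^2 ,
\]
and since $|Zu^\e|\le 2|\nabla_\e^2u^\e|$ pointwise this term is \emph{comparable to the entire left-hand side} with the large constant $C(\beta+1)^4$ in front; there is no smallness to exploit, so it cannot be absorbed. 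Adding the clean Caccioppoli inequality for $Zu^\e$ (Lemma \ref{stimaZu}) does not close the loop either, because its right-hand side reproduces a term of exactly the same form. The non-degeneracy $\ddelta>0$ by itself only bounds the ellipticity from below; it does not remove this obstruction.

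What the paper actually does at this point is the genuinely nontrivial part: it tests the differentiated equations with the non-standard function $\eta^{\beta+2}|Zu^\e|^\beta X_l^\e u^\e$ to derive a reverse H\"older--type inequality for $Zu^\e$ weighted by powers of itself (Lemma \ref{cacciopoXZ}, the parabolic analogue of Lemma \ref{lemma:rev}), iterates in $\beta$ to trade $|Zu^\e|^\beta$ for $(\ddelta+|\nabla_\e u^\e|^2)^{\beta/2}$ with controlled constants, and uses $\ddelta>0$ only to homogenize the extra terms produced by $\partial_t$ (the estimate labelled as the ``simple trick'' in Section 7.1). Only after this does one obtain the homogeneous Caccioppoli inequality of Theorem \ref{CCGMt2.8} and hence, via Moser iteration, the uniform Lipschitz bound; your subsequent steps (uniform $C^\alpha$ for $X_ku^\e$ and $Zu^\e$ from the linear theory, passage to the limit, Schauder bootstrap) then proceed as you describe. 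Without supplying this mechanism --- or an alternative such as the fractional difference-quotient scheme of \cite{Ca1} combined with the Hilbert--Haar a priori bound --- the proof is incomplete at its central step.
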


The three theorems above continue to hold for more general equations of the form 
\begin{equation}\label{xdependence}\p_t u - \sum_{i=1}^{2n} X_i A_i (x, \nabla_0 u)=0,\end{equation}
with $A_i$ satisfying structure conditions similar to \eqref{structure-zero}, see \cite{CCLDO,CCG, CCZ,Muk2}.
In particular, thanks to Darboux theorem, the regularity continues to hold in the setting of subriemannian contact manifolds, see \cite{CCLDO}.
We also note that in the special case where there is no direct dependence on the space variable, i.e.  $A_i(x,\xi)=A_i(\xi)$, the parameters dependence  in all theorems  is more explicit, with
$C=C(n,p,\lambda, \Lambda) \mu^{\frac 1 2}>0$. 

In the linear setting, the first results on interior regularity of the gradient of  weak solutions  of subelliptic equations go back to the pioneering work of  H\"ormander \cite{Ho}. In the nonlinear case,  an early result on  the H\"older regularity of the  horizontal gradient of weak solutions of equation \eqref{maineq} is due to the first named author, who in the special quadratic growth case $p=2$ proved smoothness of solutions (see \cite{Ca1, Ca2} in the Heisenberg group and later in any Carnot group). Subsequently several authors contributed to the study of the regularity. 
Domokos and Manfredi \cite{DM1, DM2} proved the H\"older continuity of the gradient of $p$-harmonic functions by the Cordes perturbation techniques when $p$ is close to 2 . No explicit bound on $p$ was given. Domokos \cite{Dom} showed that the vertical derivative 
$Zu \in L^p_\loc(\Omega)$, if $1 < p < 4$, for the weak solutions $u$ of equation (\ref{maineq}). The Lipschitz continuity of solutions to equation (\ref{maineq}) with $\delta\ge 0$ was proved in \cite{Mingione-Zatorska-Zhong} for $2\le p<4$. This extended an earlier result of
Manfredi and Mingione \cite{Manfredi-Mingione} for the non-degenerate case $\delta> 0$ with $p$ in a smaller range. Both of the proofs in \cite{Manfredi-Mingione, Mingione-Zatorska-Zhong} were based on Domokos' integrability result on $Zu$.
Eventually 
Theorem \ref{main-th-stationary} was proved by the third named author in the range $2\le p <\infty$ and then extended to the full range $1<p<\infty$ in \cite{MZ}. We also mention  alternative proofs in \cite{CCLDO,Ricciotti}.

Theorem \ref{main1} was proved by Garofalo and the first two named authors in \cite{CCG}. In that paper the authors also provide a complete Schauder theory for \eqref{xdependence} in the non-degenerate case with $\delta>0$.

Theorem \ref{main-th-parabolic}  is a recent result of the authors from  \cite{CCZ}. In that paper, in addition to the Lipschtiz regularity for solutions in the range $2\le p \le 4$, the authors also improve on the structural hypotheses in \cite{CCG}, proving that 
 the conditions \eqref{structure-zero} are sufficient to construct suitable Riemannian approximations (which instead were part of the hypotheses in \cite{CCG}). 


\bigskip

In the following we will endeavour to sketch a road map of the various ideas and techniques that have led to these results, along with a detailed list of references. We will also highlight the challenges and conclude with a short list of open problems. 

\section{approximation schemes}

\subsection{Subriemannian approximation approach}\label{riemannian-approx}

Subriemannian metrics can be seen as Gromov-Hausdorff limits of sequences of collapsing Riemannian metrics, where the non-horizontal directions are increasing penalized. This metric approximation in turns leads to an approximation of those subelliptic operators naturally arising from the concepts of energy, like the sub-Laplacian or the subelliptic $p-$Laplacian, by means of their Riemannian counterparts.  This approximation is in fact a regularization scheme, and it has been used widely in the literature to prove a-priori estimates for weak solutions of subelliptic PDE. 

In this section we will look at the specific case of the Heisenberg group.
Consider the family of left-invariant Riemannian metrics $g^\e$ in $\Hn$ parametrized by $\e>0$, defined by setting the frame $\{X_1,..., X_{2n}, \e X_{2n+1}\}$ to be $g^\e-$orthonormal.  The $g_\e$ gradient $\nabla_\e u $ of a function $u$ has then length $$|\nabla_\e u|^2_\e= \sum_{i=1}^{2n} (X_i u)^2 + \e^2 (X_{2n+1}u)^2.$$
 In order to study the balls associated to the associated control metric $d_\e$ one can consider a regularized gauge function
\begin{equation}\label{gauge}
N_\e^2(x)= \sum_{i=1}^{2n} x_i^2 + \min \bigg\{ |x_{2n+1}|, \frac{x_{2n+1}^2}{\e^2} \bigg\}.
\end{equation}
For $x,y\in \Hn$, we will denote by $y^{-1}$ the inverse of an element and $xy$ the product, with respect to the Heisenberg group law.   In \cite[Lemma 2.13]{CC} it is proved that the pseudo distance function $d_{G,\e}(x,y):=N_\e(y^{-1}x)$  is equivalent to the distance function $d_\e$, i.e. there exists a constant $A>0$ such that for all $x,y\in \Hn$, and for all $\e>0$ one has
\begin{equation}\label{ballsequiv}
A^{-1} d_{G,\e}(x,y) \le d_\e (x,y) \le d_{G,\e} (x,y).
\end{equation}
This estimate expresses in a quantitative fashion the fact that at short scale, with distance less than $\e>0$, the metric $d_\e$ is essentially the Euclidean metric, while at larger scales it behaves like a subriemannian metric. 

From \eqref{ballsequiv} it follows immediately that the doubling property 
$$|B_\e(x,2R)| \le C |B_\e(x,R)|,$$
holds uniformly in $\e>0$ with $C>1$ independent of the choice of $\e>0$. In \cite{CC, DMR} it is also shown that the Poincar\'e inequality holds uniformly with constants independent of $\e>0$. 

For $1<p<\infty$,  and $\Om \subset \Hn$, the energy functionals associated to $g_\e$ are
$$E_p(u,\Om)=\int_\Om |\nabla_\e u|^p  $$
and their Euler-Lagrange equations give rise to regularized $p-$Laplacians
$$L_{p,\e} u:=X_i^\e \Bigg( |\nabla_\e u|^{p-2} X_i^\e u \Bigg),$$
where $X_i^\e= X_i$ for $i=1,..., 2n$ and $X_{2n+1}^\e= \e X_{2n+1}=\e \partial_{x_{2n+1}}$.

The Euclidean and Riemannian elliptic and parabolic theory \cite{Le, Tolksdorff, DB,DF} can then be invoked to provide interior $C^{1,\alpha}$ regularity for the solutions of $$\p_t u^\e(x,t) = L_{p,\e} u^\e(x,t) \text{ in }\Omega \times (0,T)$$
or its stationary counterpart $L_{p,\e}u^\e=0$.

The approach to regularity in \cite{CC}, \cite{CCLDO}, \cite{CCG}, \cite{CCZ} consists in proving regularity estimates that are stable as $\e\to 0$.   Note that stable estimates for the interior $C^\alpha$ regularity of solutions $u^\e$ follow from the stability of the doubling property and of the Poincar\'e estimates (see \cite{CC}  where in addition one can find also stable estimates for the heat kernels). 

In order to prove $C^{1,\alpha}$ estimates near a point $x\in \Omega$ for weak solutions $u$ of \eqref{maineq}, one considers  for each $\e>0$ a sequence of smooth solutions to the regularized Dirichlet problem
\begin{equation}
\begin{cases} \sum_{i=1}^{2n+1} X_i^\e \Bigg([\delta+ |\nabla_\e u_\e|^2]^{\frac{p-2}{2}} X_i^\e u_\e \Bigg)=0 \text{ in } B(x,r)\subset \subset \Om \\ u_\e = u \text{ on } \partial B(x,r).
\end{cases}
\end{equation}

Uniform convergence $u^\e \to u$ in compacts subsets of $B(x,r)$ follows from the stable estimates on the  $C^\alpha$ regularity of solutions $u^\e$, as mentioned above.

\begin{rmrk}This procedure has been extended in \cite{CCZ} to approximate the quasilinear equation \eqref{maineq}, leading to define 
a family of functions $A_{i,\delta}^\e(\xi)$  defined for every $\xi \in \R^{2n+1}$, 
which approximate the functions 
$A_{i}$ in the following sense. 
If $\xi =  \sum_{i=1}^{2n} \xi_i X_i^\e + \xi_{2n+1} X_{2n+1}^\e$ , and $\xi^\e =  \sum_{i=1}^{2n} \xi_i X_i^\e + \e \xi_{2n+1} X_{2n+1}^\e$   one has 
\begin{equation}\label{structure2}
A_{i,\delta}^\e(\xi^\e)\ \underset{\e\to 0^+, \delta \to 0}{\longrightarrow}\    A_{i}(\xi_1,...,\x_{2n}, 0),
\end{equation}
and 
\begin{equation}\label{structure-epsilon}
\begin{cases}
\lambda (\ddelta+|\xi|^2)^{\frac{p-2}{2}} |\eta|^2 \le  \p_{\xi_j} A_{i,\delta}^\e(\xi) \eta_i \eta_j \le \Lambda (\ddelta+|\xi|^2)^{\frac{p-2}{2}} |\eta|^2, 
\\
|A_{i,\delta}^\e(\xi)| +  |\p_{x_j} A_{i,\delta}^\e(\xi)| \le  \Lambda (\ddelta+|\xi|^2)^{\frac{p-1}{2}},
\end{cases}
\end{equation}
 for all $\eta\in \R^{2n+1}$, 
 for some $0<\lambda\le \Lambda <\infty$ independent of $\e$.
\end{rmrk}
In this more general setting, the approximation is based on the study of the equations 
\begin{equation}\label{eqapprox}
\begin{cases}   \sum_{i=1}^{2n+1}X_i^\e (A_{i,\delta}^\e(\nabla _\epsilon u^\epsilon)) =0 \text{ in } B(x,r)\subset \subset \Om \\ u_\e = u \text{ on } \partial B(x,r).
\end{cases}
\end{equation}
and the parabolic counterpart

\begin{equation}\label{approx1}
\begin{cases}  \p_t u^\epsilon=  \sum_{i=1}^{2n+1}X_i^\e (A_{i,\delta}^\e(\nabla _\epsilon u^\epsilon))  \text{ in } B(x,r)\times (t_1,t_2)  \\ u_\e = u \text{ on } \partial_P \bigg(B(x,r)\times (t_1,t_2)\bigg),
\end{cases}
\end{equation}
where by $\partial_P (B\times (t_1,t_2))$ we denote the parabolic boundary of the cylinder, i.e. $\partial B \times (t_1,t_2)\cup B\times \{t_1\}.$
For both equations, estimates of the gradient of the solutions $(u_\epsilon)$ uniformly in the variable $\epsilon$ provide the regularity of the limit function. The heart of the matter is then to prove higher regularity estimates for the $u_\e$ which are uniform in both parameters $\e$ and $\delta$.

\subsection{Hilbert-Haar approach}

In the setting of Euclidean spaces, the
Hilbert-Haar theory gives the existence of Lipschitz continuous minimizers for convex
functional if the boundary values satisfy the bounded slope condition. 
The smooth boundary values in strictly convex domain satisfy this condition. We refer to \cite{Clark} for the detailed discussions in this setting.  

In the setting of Heisenberg group, we have similar results as those in Euclidean spaces. Let $\Omega\subset \mathbb{H}^n$ be a bounded domain.  We consider it as a  domain in $\mathbb{R}^{2n+1}$. The following theorem was proved in \cite{Zhong}, where we need an extra condition on $A=(A_1, ...,A_{2n})$ of equation (\ref{maineq}): $A=\nabla f$ for a function $f:\mathbb{R}^{2n}\to \mathbb{R}$.

\begin{thrm} Suppose that $\Omega\subset\mathbb{R}^{2n+1}$ be a bounded uniformly convex domain. Let $\phi$ be a $C^2$ function
in $\overline{\Omega}$, and $ u\in HW^{1,p}(\Omega)$ be the unique solution of equation (\ref{maineq}) with 
$u-\phi\in HW^{1,p}_0(\Omega)$. Then $u$ is Lipschitz continuous in $\Omega$, that is, 
\[
\| \nabla_0 u\|_{L^\infty(\Omega)}\le M,
\]
where the constant  $M$ depends on $n, \Omega$ and $\max_{\overline{\Omega}}( |D\phi| +|D^2\phi|)$.
\end{thrm}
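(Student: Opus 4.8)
The plan is to establish an a priori gradient bound for solutions of the regularized equations and then pass to the limit, following the classical Hilbert--Haar strategy adapted to the Heisenberg setting. First I would reduce to the Riemannian approximation scheme of Section \ref{riemannian-approx}: for each $\e>0$ and $\ddelta>0$ solve the approximating Dirichlet problem \eqref{eqapprox} with the same boundary datum $\phi$, obtaining smooth solutions $u_\e$ to an equation which is uniformly elliptic in the Euclidean sense once $\e,\ddelta>0$ are fixed. Because the structural condition $A=\nabla f$ guarantees that $u_\e$ is the minimizer of a convex functional, a comparison principle holds. The whole argument then rests on producing a bound on $\|\nabla_\e u_\e\|_{L^\infty}$ that is independent of $\e$ and $\ddelta$, and on controlling it solely in terms of the boundary geometry and $\max_{\overline{\Om}}(|D\phi|+|D^2\phi|)$.

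The core mechanism is the bounded slope condition. Since $\Om\subset\R^{2n+1}$ is uniformly convex and $\phi\in C^2(\overline{\Om})$, the boundary values satisfy a bounded slope condition: there is a constant $K$, controlled by the uniform convexity of $\Om$ and by $\max_{\overline{\Om}}(|D\phi|+|D^2\phi|)$, such that for each boundary point $x_0\in\p\Om$ one can find affine functions $\ell^\pm_{x_0}$ with slope bounded by $K$ lying below and above $\phi$ on $\p\Om$ and agreeing with $\phi$ at $x_0$. The key step is then to use these affine barriers as comparison functions. An affine function on $\R^{2n+1}$ is a solution (or at least a sub/supersolution, after checking the sign produced by the commutator structure) of the regularized equation, so by the comparison principle $\ell^-_{x_0}\le u_\e\le\ell^+_{x_0}$ on $\overline{\Om}$, giving a uniform Lipschitz bound at the boundary: $|u_\e(x)-\phi(x_0)|\le K\,|x-x_0|$ for $x\in\Om$, $x_0\in\p\Om$.

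Next I would promote this boundary Lipschitz estimate to an interior one by exploiting translation invariance together with the comparison principle, the standard Hartman--Nirenberg/Hilbert--Haar trick. For a fixed direction and small Euclidean translation $\tau$, the translated function $u_\e(\cdot+\tau)$ solves the same equation (here one must use that the coefficients $A^\e_{i,\ddelta}$ and the frame $X_i^\e$ interact with translations in a controlled way; left-invariance of the horizontal frame is what makes this work in $\Hn$). Comparing $u_\e(\cdot+\tau)$ and $u_\e$ on the overlap domain, and using the boundary bound to control their difference on the boundary of the overlap, yields $\|u_\e(\cdot+\tau)-u_\e\|_{L^\infty}\le K|\tau|$ throughout, hence a uniform global Lipschitz constant $M$ for the $u_\e$ with respect to the Euclidean metric, and therefore for $|\nabla_\e u_\e|$. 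Finally I would let $\e\to 0^+$ and $\ddelta\to 0$: the uniform bound $\|\nabla_\e u_\e\|_{L^\infty}\le M$ passes to the limit, $u_\e\to u$ locally uniformly as in the approximation scheme, and $\|\nabla_0 u\|_{L^\infty(\Om)}\le M$ follows for the unique weak solution $u$.

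I expect the main obstacle to be verifying that the affine and translated comparison functions genuinely interact well with the non-commutative, $\e$-dependent structure of the Heisenberg frame $\{X_i^\e\}$. In the Euclidean Hilbert--Haar theory affine functions are exactly harmonic for the relevant operator and translations commute with everything; here the extra vertical terms in $X_i$ mean that an affine function in the coordinates $(x_1,\dots,x_{2n+1})$ has a nonzero, though explicitly computable, horizontal gradient, and one must check that the comparison still produces bounds with constants independent of $\e$ and $\ddelta$. The uniform convexity of $\Om$ must be quantitatively converted into the slope bound $K$ in a way that survives this subriemannian distortion, and controlling the commutator-generated lower-order terms uniformly in $\e$ is precisely where the delicate estimates lie.
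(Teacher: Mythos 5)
Your overall strategy is the one the paper has in mind: this is the classical Hilbert--Haar argument (bounded slope condition from uniform convexity of $\Omega$ and $\phi\in C^2$, affine barriers at boundary points via the comparison principle, then a translation argument to propagate the boundary Lipschitz bound to the interior). The paper's stated crux is that \emph{affine} functions $L(x)=\sum_{i=1}^{2n+1}a_ix_i$ are exact solutions of \eqref{maineq}; you should verify this rather than hedge about sub/supersolutions. A direct computation gives $X_iX_jL=\pm\tfrac12\,\delta_{i,j\pm n}\,a_{2n+1}$, so $\sum_{i,j}\partial_{\xi_j}A_i(\nabla_0L)\,X_iX_jL$ reduces to $\tfrac{a_{2n+1}}{2}\sum_{k=1}^n\big(\partial_{\xi_k}A_{k+n}-\partial_{\xi_{k+n}}A_k\big)(\nabla_0L)$, which vanishes precisely because $A=\nabla f$ makes $DA$ symmetric. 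This is the whole point of the extra hypothesis, and it also means your detour through the Riemannian approximation \eqref{eqapprox} is unnecessary: affine functions solve the subelliptic equation itself, and the comparison principle for weak solutions of \eqref{maineq} follows from the monotonicity in \eqref{structure-zero} without regularizing.

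The genuine gap is in your translation step. You compare $u(\cdot+\tau)$ with $u$ for a Euclidean translation $\tau$, but Euclidean translates do \emph{not} solve the same equation: $X_1=\partial_1+\tfrac{x_{n+1}}{2}\partial_{2n+1}$ has variable coefficients, so for $v(x)=u(x+\tau)$ one finds $X_1v(x)=(X_1u)(x+\tau)-\tfrac{\tau_{n+1}}{2}(\partial_{2n+1}u)(x+\tau)$, and the equation is destroyed whenever $\tau$ has a nonzero horizontal component. The frame $\{X_i\}$ is invariant under \emph{left group} translations, so the comparison must be run between $u$ and $u\circ L_h$ on $\Omega\cap h^{-1}\Omega$. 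This substitution is what makes the argument close, because for fixed $h$ the map $x\mapsto h\cdot x$ is affine in $x$, so the class of affine functions is preserved under group translations and the bounded slope constant $K$ degrades only by a factor controlled by $\operatorname{diam}(\Omega)$; this is exactly the structural fact (valid in step-two groups, cf.\ \cite{DoM2}) that lets the Euclidean Hilbert--Haar scheme survive. As written, with Euclidean translations, the interior step fails.
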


In the theorem, $D\phi$ is the Euclidean gradient of $\phi$ and $D^2\phi$ the Hessian matrix of $\phi$. The proof of the above theorem
is similar to that in the Euclidean setting. The essential point is that all linear functions
\[
L(x)=\sum_{i=1}^{2n+1} a_ix_i, \quad x=(x_1,x_2,...,x_{2n+1}), a_i\in \mathbb{R},
\]
are solutions to equation (\ref{maineq}) with its own boundary value. This is the case for the Heisenberg group. It was shown in
\cite{DoM2} that this is also the case for the Carnot groups
of step two, Goursat groups and all groups of step three with linearly independent third order commutators. So, the above theorem is also true for these groups. 

A natural open problem arises: do we have similar theorem as the above one for the general Carnot groups? To answer this question, or to establish the Hilbert-Haar theory in the subriemannian setting, one can not use the Euclidean approach and one probably needs to built up the "subriemannian linear functions". 

The Hilber-Haar theorem provides a qualitative upper bound for the Lipschtiz norm of the weak solutions of \eqref{maineq}, and consequently allows for an approximation scheme where 
one considers solutions $u^\delta$ of \eqref{maineq} as $\delta\to 0$. In view of the results in the quadratic case $p=2$ in \cite{Ca1}, these are smooth functions, and so one can differentiate the equation without worrying about issues of differentiability along the center. 

\section{Cacciopoli type inequalities satisfied by the first derivatives}

The first step to prove higher regularity estimates for the $u_\e$ is to differentiate the equation. A direct computation ensures that, if $u^\epsilon$ is a solution of \eqref{eqapprox}, $w^\epsilon _l = X^\epsilon_l u^\epsilon$, with $ l = 1, 2, . . . , 2n$, and $s_l = (-1)^{[l/n]}$ then the function $w^\epsilon _l$ is  a  solution of

\begin{equation}\label{eqderivX}
 \sum_{i,j=1}^{2n} X^\epsilon_i\Big( A^\epsilon_{i, \delta, \xi_j}  (\nabla_\epsilon u^\epsilon) X^\epsilon _l X^\epsilon _ju^\epsilon \Big) + s_l Z (A^\epsilon _{l+s_ln \delta}  (\nabla_\epsilon u^\epsilon)) =0.\end{equation}

Using this equation, we obtain the following Cacciopoli type inequality. Its proof is standard. We  use $\varphi=
\eta^2X^\e_l u^\e$ as a testing function to equation (\ref{eqderivX})  to obtain the result.

\begin{lemma}\label{lemma:Cacci2}
Let $1<p<\infty$ and $u^\e$ be a weak solution of equation \eqref{eqapprox} with $\delta\ge0$. Then for any $\beta\ge 0$ and every $\eta\in C^\infty_0(\Omega)$, we have that
\begin{equation}\label{cacci2}
\begin{aligned}
\int_\Omega \eta^2 (\delta+ |\nabla_\e u^\e|^2)^{\frac{p-2+\beta}{2}} |\nabla_\e^2u^\e|^2\le  & c \int_\Omega \big( |\nabla_\e\eta|^2+|\eta||Z\eta|\big)(\delta+ |\nabla_\e u^\e|^2)^{\frac{p+\beta}{2}}\\
&+
c (\beta+1)^4\int_\Omega \eta^2 (\delta+ |\nabla_\e u^\e|^2)^{\frac{p-2+\beta}{2}} |Zu^\e|^2,
\end{aligned}
\end{equation}
where $c=c(n,p)>0$.
\end{lemma}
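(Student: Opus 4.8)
The plan is to make the formal computation rigorous at the regularized level, where $u^\e$ is smooth, by testing the differentiated equation \eqref{eqderivX} against a weighted multiple of $X_l^\e u^\e$ and summing in $l$. Concretely, for each $l=1,\dots,2n$ I would use
\[
\varphi_l=\eta^2\,(\delta+|\nabla_\e u^\e|^2)^{\beta/2}\,X_l^\e u^\e,
\]
which reduces to the test function $\eta^2 X_l^\e u^\e$ mentioned above when $\beta=0$. Writing $a_{ij}=A^\e_{i,\delta,\xi_j}(\nabla_\e u^\e)$ and $B_l=A^\e_{l+s_l n,\delta}(\nabla_\e u^\e)$, I would insert $\varphi_l$ into the weak form of \eqref{eqderivX} and integrate by parts, using that $Z=\partial_{x_{2n+1}}$ is skew-adjoint to transfer the vertical derivative in the second group of \eqref{eqderivX} onto $\varphi_l$. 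Summing over $l$ then yields an identity in which a principal quadratic form in the second derivatives is balanced against cutoff, weight, and vertical contributions.

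The coercive term comes from the piece of $X_i^\e\varphi_l$ in which $X_i^\e$ falls on $X_l^\e u^\e$. After replacing $X_i^\e X_l^\e u^\e$ by $X_l^\e X_i^\e u^\e$ and invoking the lower bound in \eqref{structure-epsilon}, this piece is bounded below by $\lambda\int \eta^2(\delta+|\nabla_\e u^\e|^2)^{(p-2+\beta)/2}|\nabla_\e^2 u^\e|^2$, the left-hand side of \eqref{cacci2}. The cost of the commutation is the bracket $[X_i^\e,X_l^\e]u^\e$, which is a multiple of $Zu^\e$ because $[X_i,X_{i+n}]=Z$; using $|a_{ij}|\le\Lambda(\delta+|\nabla_\e u^\e|^2)^{(p-2)/2}$ and Young's inequality, these commutator terms are absorbed as a small fraction of the coercive term plus a constant times $\int\eta^2(\delta+|\nabla_\e u^\e|^2)^{(p-2+\beta)/2}|Zu^\e|^2$. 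The piece of $X_i^\e\varphi_l$ in which $X_i^\e$ meets $\eta^2$ is estimated, again by the upper ellipticity bound and Young's inequality, by $c\int|\nabla_\e\eta|^2(\delta+|\nabla_\e u^\e|^2)^{(p+\beta)/2}$, the first term on the right of \eqref{cacci2}.

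The genuinely subriemannian terms are the vertical contribution $-\sum_l s_l\int B_l\,Z\varphi_l$ together with the piece where $X_i^\e$ hits the weight $(\delta+|\nabla_\e u^\e|^2)^{\beta/2}$. Distributing $Z$ across $\varphi_l$, the derivative $Z(\eta^2)=2\eta Z\eta$ yields, via $|B_l|\le\Lambda(\delta+|\nabla_\e u^\e|^2)^{(p-1)/2}$, the term $c\int|\eta||Z\eta|(\delta+|\nabla_\e u^\e|^2)^{(p+\beta)/2}$, i.e.\ the second half of the first right-hand term of \eqref{cacci2}. The remaining pieces involve $ZX_l^\e u^\e=X_l^\e Zu^\e$ and, through $Z$ acting on the weight, factors $X_k^\e Zu^\e$; since a horizontal derivative of $Zu^\e$ is not part of $\nabla_\e^2 u^\e$, I would integrate by parts once more to move that horizontal derivative off $Zu^\e$, turning it into the horizontal Hessian of the other factors times $Zu^\e$. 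Young's inequality then absorbs the Hessian factors into the coercive term and keeps $\int\eta^2(\delta+|\nabla_\e u^\e|^2)^{(p-2+\beta)/2}|Zu^\e|^2$ on the right. Each differentiation of the weight contributes a factor $\beta$, and the successive applications of Young's inequality needed to handle the horizontal and vertical weight-derivatives promote these to the quartic factor $(\beta+1)^4$ appearing in \eqref{cacci2}.

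The main obstacle is the control of the vertical terms: one must arrange that only the undifferentiated quantity $|Zu^\e|^2$ — not $|\nabla_\e Zu^\e|^2$ — survives on the right, which is exactly what the extra integration by parts and the absorption into the coercive Hessian achieve. Equally important is that every constant produced by the ellipticity bounds and by Young's inequality depends only on $n$ and $p$ and is independent of $\e$ and $\delta$, so that the estimate is stable in the vanishing-viscosity limit; the precise power $(\beta+1)^4$ is the technical point that later makes a Moser-type iteration in the parameter $\beta$ possible.
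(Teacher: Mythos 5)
Your proposal is correct and follows essentially the same route as the paper: testing the differentiated equation \eqref{eqderivX} with $\varphi=\eta^2(\delta+|\nabla_\e u^\e|^2)^{\beta/2}X_l^\e u^\e$, summing over $l$, using the ellipticity bounds \eqref{structure-epsilon}, commuting horizontal derivatives at the cost of $Zu^\e$ terms, and integrating by parts once more so that only $|Zu^\e|^2$ (never $|\nabla_\e Zu^\e|^2$) survives on the right. The only refinement over the paper's one-line description (which mentions only the unweighted test function $\eta^2X_l^\e u^\e$) is your explicit inclusion of the weight $(\delta+|\nabla_\e u^\e|^2)^{\beta/2}$ needed for general $\beta\ge 0$, which is indeed what the full argument requires.
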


The main obstacle in establishing the regularity properties of the first derivatives for the gradient of the solution, is due to the presence of the derivative $Z(u^\epsilon)$ in the second member of 
\eqref{cacci2}, where $Z$ is a derivative of order 2. For this reason, also derivatives with respect to $Z$ have to be considered. 

In particular the function
$Zu^\epsilon$  is   a  solution of the equation
\begin{equation}\label{eqderivZ}
\sum_{i,j=1}^{2n}
X^\epsilon_i(A^\epsilon_{\delta i, \xi_j}(\nabla_\epsilon u^\epsilon )X^\epsilon _j Zu^\epsilon)=0
\end{equation}

The equation satisfied by $Zu^\epsilon$ can be interpreted as a linear equation, with coefficients depending on $\nabla_\e u^\e$. For this reason, 
a standard Caccioppoli type estimates is satisfied by the derivative $Zu^\e$.

\begin{lemma}[Lemma 3.4 \cite{CCG}]\label{stimaZu} Let $u^\e$ be a solution of \eqref{eqapprox}  with $\delta>0$.
For every $ \beta \geq 0$ and  non-negative $\eta\in C^\infty_0 (\Omega)$  one has
\begin{equation}
\begin{aligned}
\int_\Omega 
(\delta+|\nabla_\e u^\e|^2)^{\frac{p-2}{2}}|Zu^\e|^{\beta} | \nabla_\e Zu^\e|^2\eta^{4+\beta}
\le & C\int_\Omega
(\delta+|\nabla_\e u^\e|^2)^{\frac{p-2}{2}}|Zu^\e|^{\beta+2}|\nabla_\e\eta|^2 \eta^{2+\beta}
\end{aligned}
\end{equation}
where $C=C(\lambda, \Lambda)>0$.
\end{lemma}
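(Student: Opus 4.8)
The plan is to read \eqref{eqderivZ} as a single linear, divergence-form equation $\sum_{i,j} X_i^\e(a_{ij}\, X_j^\e w)=0$ for the scalar $w:=Zu^\e$, whose measurable coefficients $a_{ij}:=A^\e_{\delta i,\xi_j}(\nabla_\e u^\e)$ are, by the ellipticity half of \eqref{structure-epsilon}, comparable to the scalar weight $m:=(\delta+|\nabla_\e u^\e|^2)^{\frac{p-2}{2}}$; precisely $\lambda\, m\,|\zeta|^2\le \sum_{i,j}a_{ij}\zeta_i\zeta_j\le \Lambda\, m\,|\zeta|^2$. Since $\delta>0$, the equation \eqref{eqapprox} is uniformly (Riemannian) elliptic, so $u^\e$ is smooth and $w$ is a legitimate function on which to run a Caccioppoli argument. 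I would then test the weak form of \eqref{eqderivZ} with
$$\varphi=\eta^{4+\beta}\,|w|^{\beta}w,$$
using $(w^2+\s^2)^{\beta/2}$ in place of $|w|^{\beta}$ and passing $\s\to 0$ at the end to cover non-integer $\beta$.

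Expanding $X_i^\e\varphi=(4+\beta)\,\eta^{3+\beta}(X_i^\e\eta)|w|^{\beta}w+(\beta+1)\,\eta^{4+\beta}|w|^{\beta}X_i^\e w$ and inserting into $\int \sum_{ij}a_{ij}X_j^\e w\,X_i^\e\varphi=0$ splits the identity into a diagonal part and a cross part:
$$(\beta+1)\int \Big(\sum_{ij}a_{ij}X_j^\e w\,X_i^\e w\Big)|w|^{\beta}\eta^{4+\beta}=-(4+\beta)\int \Big(\sum_{ij}a_{ij}X_j^\e w\,X_i^\e\eta\Big)|w|^{\beta}w\,\eta^{3+\beta}.$$
On the left I would apply the lower ellipticity bound to obtain $(\beta+1)\lambda\int m\,|\nabla_\e w|^2|w|^{\beta}\eta^{4+\beta}$; on the right the upper bound together with Cauchy--Schwarz for the form $a_{ij}$ bounds the integrand by $(4+\beta)\Lambda\, m\,|\nabla_\e w|\,|\nabla_\e\eta|\,|w|^{\beta+1}\eta^{3+\beta}$.

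The decisive step is the Young splitting. Writing the right-hand integrand as a product of $A:=m^{1/2}|\nabla_\e w|\,|w|^{\beta/2}\eta^{(4+\beta)/2}$ and $B:=m^{1/2}|\nabla_\e\eta|\,|w|^{\beta/2+1}\eta^{(4+\beta)/2-1}$, one checks that $A^2$ reproduces exactly the left-hand integrand while $B^2=m\,|\nabla_\e\eta|^2|w|^{\beta+2}\eta^{2+\beta}$ is exactly the right-hand side claimed in the Lemma; this is precisely what forces the exponent $4+\beta$ on $\eta$. Applying $AB\le \tfrac{\tau}{2}A^2+\tfrac1{2\tau}B^2$ with $\tau=\tfrac{(\beta+1)\lambda}{(4+\beta)\Lambda}$ absorbs half of the diagonal term into the left-hand side and leaves the estimate with constant $\tfrac{(4+\beta)^2\Lambda^2}{(\beta+1)^2\lambda^2}$. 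I expect the only genuine subtlety to be the $\beta$-uniformity of the constant asserted in the statement: it holds because $(4+\beta)/(\beta+1)\le 4$ for every $\beta\ge 0$, so the factor is majorized by $16\Lambda^2/\lambda^2$ and all dependence on $\beta$ disappears, which is exactly the feature needed to iterate the inequality later. Letting $\s\to 0$ and renaming the constant $C=C(\lambda,\Lambda)$ finishes the proof.
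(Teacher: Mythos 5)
Your proposal is correct and follows exactly the route the paper indicates: read \eqref{eqderivZ} as a linear divergence-form equation in $w=Zu^\e$ with coefficients controlled by the weight $(\delta+|\nabla_\e u^\e|^2)^{\frac{p-2}{2}}$, test with $\eta^{4+\beta}|w|^{\beta}w$, and absorb via Young's inequality; your bookkeeping of the exponents and of the $\beta$-uniform constant $(4+\beta)^2\Lambda^2/((\beta+1)^2\lambda^2)\le 16\Lambda^2/\lambda^2$ is accurate. The only cosmetic caveat is that ``Cauchy--Schwarz for the form $a_{ij}$'' requires symmetry of $a_{ij}=\p_{\xi_j}A^\e_{i,\delta}$, which need not hold; replacing it by the entrywise bound $|a_{ij}|\le \Lambda(\delta+|\nabla_\e u^\e|^2)^{\frac{p-2}{2}}$ gives the same estimate up to a dimensional constant.
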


The main goal is now  to remove the dependence on $Zu^\e$ in the estimate of  $X^\e_lu^\e$.

\section{The linear growth case}

The first higher regularity results for subelliptic quasilinear equations \cite{Ca1}, dealt with equations of the form  \eqref{maineq}
with $A_i$ differentiable and satisfying the structure assumptions
\eqref{structure-zero}
and $|A_i(\eta)|\le C(1+|\eta|)$. In \cite{Ca1} the first named author proved smoothness of weak solutions, using fractional derivatives estimates inspired by H\"ormander's work in \cite{Ho}. The proof in  \cite{Ca1}  was made without using the Riemannian approximation.  More precisely, instead of Cacciopoli type inequalities for the regularized gradient, the argument in \cite{Ca1} yield Cacciopoli type inequalities for the difference quotients, with constants uniform in the increment. As a consequence, one  obtains estimates of the derivatives in the limit when the increment goes to $0$. This result in turn was then used in a regularization scheme by third named author in \cite{Zhong} to establish $C^{1,\alpha}$ estimates for $p-$harmonic functions for $p>1$. In order to simplify the presentation, we use the same technique for all values of $p$ and present here a sketch of a new proof of the regularity for $p=2$, based on the Riemannian approximation technique and without fractional derivatives.

A direct consequence of Lemma  \ref{cacci2} with $p=2$ and $\beta=0$ is the estimate 
of the $L^2$ norm of $Z u^\e$. Indeed 
$Z u^\e = (X^\e_1 X^\e_{n+1} - X^\e_{n+1} X^\e_{1})u^\e $, so that, first integrating by parts,  and then using the Lemma we obtain for every $\alpha>0$ a constant $C_\alpha$ such that
$$
\int_\Omega |Z u^\e|^2 \phi^2   \leq 2 \int_\Omega |\nabla_\e u^\e|  (|\nabla_\e Z u^\e|  \phi^2 +2 \phi |\nabla_\e \phi| |Z u^\e| )  \leq C_\alpha \int_{supp(\phi)} |\nabla_\e u^\e|^2 + \alpha \int_\Omega |Z u^\e|^2 \phi^2|\nabla_\e \phi|^2 
$$
Choosing $\alpha$ sufficiently small, we obtain the estimate 
$|| Z  u^\e ||_{L^2} \le C ||\nabla_\e u^\e||_{L^2}.$
From the latter and invoking the  Caccioppoli inequality in  Lemma \ref{lemma:Cacci2} with $p=2$, $\beta=0$, a standard argument involving Morrey classes would then yield the H\"older regularity of $\nabla_0 u^\e$ with estimates uniform in $\e>0$ and in view of the Schauder theory (see \cite{Xu}) conclude the smoothness of $u^e$, with estimates uniform as $\e\to 0$ (see \cite{Ca1} for details on the adaptation of Morrey's estimates and for further references).

\section{Lipschitz regularity of weak solutions of equation \eqref{maineq}}

In the section, we discuss the local Lipschitz continuity of weak solutions to equation ({\ref{plap}) for the full range $1<p<\infty$. Actually, as shown in \cite{Zhong}, this result holds for weak solutions to equation (\ref{maineq}), see also \cite{Muk1} for more general equations. To simplify the presentation, we will work on the $\delta$-regularized $p$-Laplacian equation,
\begin{equation}\label{maineqdelta}
\sum_{i=1}^{2n} X_i\big( (\delta+|\nabla_0 u|^2)^{\frac{p-2}{2}} X_iu\big)=0,\end{equation}
where $\delta>0$. The following theorem gives us the Lipschitz regularity for equation (\ref{maineqdelta}) with $\delta\ge 0$.

\begin{thrm}\label{thm:Lip} Let $1<p<\infty$ and $u\in HW^{1,p}(\Omega)$ be a weak solution of equation (\ref{maineqdelta}) with $\delta\ge0$. Then $\nabla_0u\in L^\infty_{\loc}(\Omega; \mathbb{R}^{2n})$. Moreover, for any ball $B_{2r}\subset \Omega$, we have that
\begin{equation}\label{nablasup}
\sup_{B_r} |\nabla_0u|\le \frac{c}{r^{Q/p}}\Big(\int_{B_{2r}} (\delta+|\nabla_0 u|^2)^{\frac p 2}\Big)^{1/p},
\end{equation}
where $c=c(n,p)>0$.
\end{thrm}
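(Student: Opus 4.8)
The plan is to prove the bound \eqref{nablasup} first for the smooth solutions $u^\e$ of the Riemannian–regularized problem \eqref{eqapprox} (equivalently \eqref{maineqdelta} written in the frame $X_i^\e$) with $\e>0$ and $\delta>0$, obtaining all constants uniform in both parameters, and then recover the statement for $u$ by letting $\e\to 0$, and if $\delta=0$ also $\delta\to 0$, using the uniform $C^\alpha$ convergence $u^\e\to u$ furnished by the stable estimates of Section \ref{riemannian-approx}. Writing $v=(\delta+|\nabla_\e u^\e|^2)^{1/2}$, the target is a Moser-type iteration yielding
\[
\sup_{B_r} v \le \frac{c}{r^{Q/p}}\Big(\int_{B_{2r}} v^{p}\Big)^{1/p},\qquad c=c(n,p).
\]

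The engine is Lemma \ref{lemma:Cacci2}. By the chain rule one has $|\nabla_\e(\eta\,v^{(p+\beta)/2})|\le c\,v^{(p+\beta)/2}|\nabla_\e\eta|+c(\beta+1)\,\eta\,v^{(p-2+\beta)/2}|\nabla_\e^2u^\e|$, so the left–hand side of \eqref{cacci2} controls $\int|\nabla_\e(\eta\,v^{(p+\beta)/2})|^2$ up to a factor polynomial in $\beta$ and the cutoff terms. The subelliptic Sobolev inequality, which by Section \ref{riemannian-approx} holds uniformly in $\e$, then upgrades $\eta\,v^{(p+\beta)/2}$ from $L^2$ to $L^{2\kappa}$ with $\kappa=Q/(Q-2)>1$, i.e. it raises the exponent of $v$ from $p+\beta$ to $\kappa(p+\beta)$. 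Choosing exponents $\beta_k$ with $p+\beta_{k+1}=\kappa(p+\beta_k)$ and a shrinking sequence of balls between $B_r$ and $B_{2r}$ produces a chain of reverse-H\"older inequalities; provided each constant grows at most polynomially in $\beta_k$, the telescoping product converges and gives the desired $L^\infty$ bound.

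The one obstruction to running this scheme is the last term in \eqref{cacci2}, namely $c(\beta+1)^4\int\eta^2 v^{p-2+\beta}|Zu^\e|^2$, which carries the vertical derivative and the unfavorable factor $(\beta+1)^4$; this is precisely the non-Euclidean difficulty emphasized in Section 3. To control it I would use the commutation relation $Z=[X_l^\e,X_{l+n}^\e]$ to write $\int\eta^2 v^{p-2+\beta}|Zu^\e|^2=\int\eta^2 v^{p-2+\beta}\,Zu^\e\,(X_l^\e X_{l+n}^\e-X_{l+n}^\e X_l^\e)u^\e$ and integrate by parts in the outer horizontal derivative. The derivative falling on the weight $\eta^2 v^{p-2+\beta}$ produces a cross term $\sim(\beta+1)\int\eta^2 v^{p-2+\beta}|\nabla_\e^2 u^\e|\,|Zu^\e|$ together with good terms weighted by $|\nabla_\e\eta|^2+|\eta||Z\eta|$, while the derivative falling on $Zu^\e$ produces a term $\sim\int\eta^2 v^{p-1+\beta}|\nabla_\e Zu^\e|$, which is handled through the vertical Caccioppoli inequality of Lemma \ref{stimaZu}. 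Thus the horizontal and vertical second–order energies form a coupled system.

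The step I expect to be the real obstacle is closing this coupled system \emph{uniformly in $\beta$}. A crude application of Young's inequality to the cross term only absorbs it for $\beta$ near $0$ — which already recovers the perturbative results valid for $p$ close to $2$ — because the coefficient multiplying $\int\eta^2 v^{p-2+\beta}|\nabla_\e^2u^\e|^2$ does not decay in $\beta$. Obtaining the full range $1<p<\infty$ requires the delicate bookkeeping that tracks the exact powers of $(\beta+1)$ through the commutator manipulation and through Lemma \ref{stimaZu}, arranged so that the $(\beta+1)^4$ factor is compensated and the whole vertical contribution becomes a genuinely lower-order perturbation. For $1<p<2$ one must in addition contend with the singular weight $v^{p-2+\beta}$ near the zero set of the gradient, which is why one works with $\delta>0$ and keeps the estimates uniform as $\delta\to0$. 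Once this yields the clean inequality
\[
\int_\Omega \eta^2 v^{p-2+\beta}|\nabla_\e^2 u^\e|^2 \le C(\beta)\int_\Omega \big(|\nabla_\e\eta|^2+|\eta||Z\eta|\big)\, v^{p+\beta}
\]
with $C(\beta)$ polynomial in $\beta$, the Moser iteration of the second paragraph applies, and letting $\e,\delta\to0$ gives \eqref{nablasup}.
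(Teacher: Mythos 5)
Your architecture matches the paper's: Riemannian approximation with estimates uniform in $\e$ and $\delta$, the Caccioppoli inequality of Lemma \ref{lemma:Cacci2} as the engine, a Sobolev--Moser iteration on $v=(\delta+|\nabla_\e u^\e|^2)^{1/2}$, and passage to the limit at the end. You also correctly isolate the one genuine obstruction, the term $c(\beta+1)^4\int\eta^2v^{p-2+\beta}|Zu^\e|^2$. But at exactly that point your proposal stops short of a proof. The mechanism you offer --- writing $Z=[X_l^\e,X_{l+n}^\e]$, integrating by parts, and closing the system through the vertical Caccioppoli inequality of Lemma \ref{stimaZu} --- is precisely the paper's \emph{interpolation} route (Lemma \ref{SobXU} combined with Lemma \ref{stimaZu} to get Lemma \ref{mainlemma}), and that route is genuinely confined to $2\le p\le 4$: the Young-inequality splitting in the proof of Lemma \ref{SobXU} uses the exponent $2(p+\beta)/(4-p)$ and the exponent $2(p+\beta)/(2p-4+\beta)$, which require $p<4$ and $p\ge 2$ respectively. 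Declaring that the full range follows from ``delicate bookkeeping'' of the powers of $(\beta+1)$ in this scheme is not a repair; no bookkeeping makes those exponents admissible outside $[2,4]$.

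What the paper actually does for general $1<p<\infty$ is a different idea, not a refinement of yours: the reverse-H\"older inequality of Lemma \ref{lemma:rev}, obtained by testing the differentiated equations \eqref{eqderivX} and \eqref{eqderivZ} with the non-standard function $\varphi=\eta^{\beta+2}|Zu^\e|^\beta X_l^\e u^\e$ and using crucially that $Zu^\e$ is itself a weak solution of \eqref{eqderivZ}. Each application trades two powers of $|Zu^\e|$ for one power of $(\delta+|\nabla_\e u^\e|^2)$ at cost $c\beta^2\|\nabla_\e\eta\|_{L^\infty}^2$; iterating $\beta/2$ times yields \eqref{cacci4} with the controlled constant $c^\beta\beta^\beta\|\nabla_\e\eta\|_{L^\infty}^\beta$, after which the offending term in \eqref{cacci2} is handled by H\"older's inequality together with $|Zu^\e|\le 2|\nabla_\e u^\e|$ and absorbed into the left-hand side. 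This descending induction on the power of $|Zu^\e|$, anchored on the equation satisfied by $Zu^\e$, is the heart of the theorem and is absent from your proposal; without it you have proved the result only in the restricted range where the interpolation argument applies.
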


In the above theorem, we state the Lipschitz regularity for equation (\ref{maineqdelta}) for all $\delta\ge 0$. Actually, we prove the theorem for the case $\delta>0$ and let $\delta$ go to zero to obtain the result for the case $\delta=0$. Notice that the constant $c$ in estimate (\ref{nablasup})
does not depend on $\delta$. This is also the case for all of the constants in the estimates in the remaining of this section. We first obtain the estimates for $\delta>0$ and then that for $\delta=0$ by letting $\delta$ go to zero. 
\subsection{Regularization procedure}

Let $u\in HW^{1,p}(\Omega)$ be a weak solution 
of equation (\ref{maineqdelta}) with $\delta>0$. 
The existing approach to prove the Lipschitz continuity of $u$, even in the Riemannian setting, involves differentiating equation (\ref{maineqdelta}) to consider  the equations for $X_lu, l=1,2,...,2n,$ and $Zu$.

\begin{rmrk}The original result of \cite{Zhong}  is based on the Hilbert-Haar theory recalled in section 2.1 as in \cite{Zhong}. 
 Let $u\in HW^{1,p}(\Omega)$ be a weak solution 
of equation (\ref{maineqdelta}) with $\delta>0$.
By the Hilbert-Haar theory in section 2.1, we may assume that $\nabla_0u\in L^\infty(\Omega;\mathbb{R}^{2n})$. With this assumption, equation (\ref{maineqdelta}) is uniformly elliptic, since we assume that $\delta>0$. 
We may now apply
the results in \cite{Ca1} and recalled in section 3 here. From Theorem 1.1 and Theorem 3.1 of \cite{Ca1}, we know that $\nabla_0u$ and $Zu$ are H\"older continuous in $\Omega$ and that
\[
\nabla_0u\in HW^{1,2}_{\loc}(\Omega;\mathbb{R}^{2n}), \quad Zu \in HW^{1,2}_{\loc}(\Omega;\mathbb{R}^{2n}).
\]
We explicitly note that the original proof of \cite{Ca1} was not based on Riemannian approximation, which could  be completely avoided by using fractional difference quotients.
\end{rmrk}
Since we based the presentation of the quadratic growth case on Riemannian approximation, we will use the same technique here. As in section 2.2, we consider for each $\varepsilon>0$  to the subriemannian approximation equation of (\ref{maineqdelta})
\begin{equation}\label{maineqdeltaep}
\begin{cases}
\sum_{i=1}^{2n+1} X_i^\e \Big( (\delta+|\nabla_\e u^\e|^2)^{\frac{p-2}{2}} X_i^\e u^\e\Big) =0 \quad &\text { in }\Omega;\\
u^\e =u &\text{ on } \partial \Omega.
\end{cases}
\end{equation}

We have already noted that the  $u^\e$ belongs to $C^\infty(\Omega)$, and obtained  Caccioppoli inequality for $X^\e u^\e$ in Lemma \ref{lemma:Cacci2}. 
Here we will obtain estimates uniform in $\e$. Eventually, we let $\e$ go to zero to obtain the estimates for $u$ and we prove the Lipschitz continuity of $u$. 
\subsection{An Homogeneous Cacciopoli type inequality for the horizontal derivatives}
We have already noted in section 3 that the main step in the regularization procedure is to remove the expression of $Zu^\e$ from the second member of the Cacciopoli inequality Lemma \ref{lemma:Cacci2}.

The main step in order to obtain 
Theorem \ref{thm:Lip} is to prove following surprising Caccioppoli
type inequality for $\nabla_\e u^\e$, 
where the expression $Zu^\e$ does not appear any more. This inequality is similar to that for weak solutions of $p$-Laplacian equation in the setting of Euclidean spaces, and the estimate of the gradient follows from the Moser procedure.

\begin{thrm}\label{thm:Cacci1} Let $1<p<\infty$ and $u^\e$ be a weak solution of equation (\ref{maineqdeltaep}) with $\delta\ge 0$. Then 
for any $\beta\ge 2$ and every $\eta\in C^\infty_0(\Omega)$, we have that
\begin{equation}\label{cacci1}
\int_\Omega \eta^2 (\delta+ |\nabla_\e u^\e|^2)^{\frac{p-2+\beta}{2}}|\nabla_\e^2u^\e|^2 \le c\beta^{10}\big(||\nabla_\e\eta||^2_{L^\infty}+||\eta Z\eta||_{L^\infty}\big)
\int_{spt(\eta)} (\delta+ |\nabla_\e u^\e|^2)^{\frac{p+\beta}{2}},
\end{equation}
where $c=c(n,p)>0$ and $spt(\eta)$ is the support of $\eta$.
\end{thrm}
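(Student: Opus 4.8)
The plan is to deduce \eqref{cacci1} from the inhomogeneous Caccioppoli inequality of Lemma \ref{lemma:Cacci2} by absorbing its single bad term. Writing $g_\e := (\delta+|\nabla_\e u^\e|^2)^{1/2}$, Lemma \ref{lemma:Cacci2} already bounds $\int_\Omega \eta^2 g_\e^{p-2+\beta}|\nabla_\e^2 u^\e|^2$ by the harmless cutoff term $c\int(|\nabla_\e\eta|^2+|\eta||Z\eta|)g_\e^{p+\beta}$ plus the \emph{vertical} term $c(\beta+1)^4\int_\Omega\eta^2 g_\e^{p-2+\beta}|Zu^\e|^2$. Thus the whole matter reduces to proving an estimate of the schematic form
\[
\int_\Omega \eta^2 g_\e^{p-2+\beta}|Zu^\e|^2 \le c\beta^{6}\big(\|\nabla_\e\eta\|_\infty^2+\|\eta Z\eta\|_\infty\big)\int_{\operatorname{spt}\eta}g_\e^{p+\beta} + \tfrac{1}{2c(\beta+1)^4}\int_\Omega\eta^2 g_\e^{p-2+\beta}|\nabla_\e^2 u^\e|^2 ,
\]
since the last term is then reabsorbed into the left-hand side supplied by Lemma \ref{lemma:Cacci2}, and the surviving $\beta$-factors multiply to $\beta^{10}$.

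To estimate the vertical term I would exploit that $Z$ is itself a horizontal commutator, $Zu^\e=(X_1^\e X_{n+1}^\e-X_{n+1}^\e X_1^\e)u^\e$. Writing $|Zu^\e|^2 = Zu^\e\,(X_1^\e X_{n+1}^\e - X_{n+1}^\e X_1^\e)u^\e$ and integrating by parts once in the outer horizontal derivative (the fields $X_j^\e$ are divergence free, hence skew-adjoint in $L^2$ against the compactly supported cutoff), the derivative redistributes onto the three factors $\eta^2$, $g_\e^{p-2+\beta}$ and $Zu^\e$. This produces three types of terms: first, cutoff terms $\int \eta|\nabla_\e\eta|\,g_\e^{p-1+\beta}|Zu^\e|$, controlled by Young's inequality against a small multiple of the vertical term plus $\int|\nabla_\e\eta|^2 g_\e^{p+\beta}$; second, weight terms of order $(\beta+p)\int\eta^2 g_\e^{p-2+\beta}|\nabla_\e^2 u^\e|\,|Zu^\e|$, arising from differentiating $g_\e^{p-2+\beta}$; and third, terms $\int\eta^2 g_\e^{p-1+\beta}|\nabla_\e Zu^\e|$ carrying a derivative of $Zu^\e$.

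For the third type I would invoke the vertical Caccioppoli inequality of Lemma \ref{stimaZu}, which controls $\nabla_\e Zu^\e$ by $Zu^\e$ with a constant independent of $\beta$; pairing it by Cauchy--Schwarz with the matching power $|Zu^\e|^{\beta}$ (legitimate since $\beta\ge 2$) rather than with the crude power zero is what keeps the correct weight $g_\e^{p+\beta}$ on the right. The second type is handled by Young's inequality, and here lies the main obstacle: a naive splitting returns the vertical term itself multiplied by a power of $\beta$ that is far from small, so it cannot be absorbed on the spot. The resolution is to treat the second and third types as a coupled system together with Lemma \ref{stimaZu}, interpolating between the horizontal and vertical Caccioppoli inequalities at the shifted exponents $\beta$ and $\beta-2$ so that the two weightings $g_\e^{p-2}|Zu^\e|^{\beta+2}$ and $g_\e^{p-2+\beta}|Zu^\e|^2$ are reconciled and the excess $\beta$-factors are balanced against the decaying coefficient $(\beta+1)^{-4}$ demanded by the reduction. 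Carefully bookkeeping every power of $\beta$ generated by the successive applications of Young's inequality and of Lemma \ref{stimaZu} is the delicate part of the argument, and it is exactly this accounting that produces the explicit $c\beta^{10}$ in \eqref{cacci1}. Once the displayed bound on the vertical term is established, substituting it into Lemma \ref{lemma:Cacci2} and absorbing the second-order term completes the proof.
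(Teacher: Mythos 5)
Your opening reduction is correct and agrees with the paper: by Lemma \ref{lemma:Cacci2} everything hinges on bounding the vertical term $\int_\Omega\eta^2(\delta+|\nabla_\e u^\e|^2)^{\frac{p-2+\beta}{2}}|Zu^\e|^2$ in a way that can be reabsorbed. But the mechanism you propose for that bound does not close. After integrating by parts in $Zu^\e=(X_1^\e X_{n+1}^\e-X_{n+1}^\e X_1^\e)u^\e$ you correctly identify the fatal term, of order $(\beta+p)\int\eta^2 g_\e^{p-2+\beta}|\nabla_\e^2u^\e|\,|Zu^\e|$, which under Young's inequality returns the vertical term multiplied by roughly $(\beta+1)^{6}$; declaring that this is resolved by ``treating the terms as a coupled system'' and ``interpolating at shifted exponents'' is not an argument, and the one concrete pairing you do propose fails: matching $\int\eta^2 g_\e^{p-1+\beta}|\nabla_\e Zu^\e|$ against the quantity controlled by Lemma \ref{stimaZu} via Cauchy--Schwarz forces a complementary factor carrying $|Zu^\e|^{-\beta}$ and $\eta^{-\beta}$, neither of which is controlled. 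Moreover, Lemma \ref{stimaZu} is stated for $\delta>0$, while the theorem is claimed for $\delta\ge 0$ with constants independent of $\delta$.

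The paper's actual device is genuinely different and is the missing idea. One first proves the reverse H\"older inequality of Lemma \ref{lemma:rev}, by testing the differentiated equations \eqref{eqderivX}/\eqref{eqderivZ} with the non-standard function $\varphi=\eta^{\beta+2}|Zu^\e|^{\beta}X_l^\e u^\e$ and summing over $l$; the crucial structural fact is that $Zu^\e$ itself solves \eqref{eqderivZ}, which is what makes the exponent of $|Zu^\e|$ drop from $\beta$ to $\beta-2$ on the right. Iterating this down in steps of two yields \eqref{cacci4} with constant $c^\beta\beta^\beta$. Then the vertical term is handled by H\"older's inequality with exponents $\frac{\beta+2}{2}$ and $\frac{\beta+2}{\beta}$, and the pointwise bound $|Zu^\e|\le 2|\nabla_\e^2u^\e|$ turns the resulting integral $\int\eta^{\beta+2}g_\e^{p-2}|Zu^\e|^{\beta+2}$ into exactly the left side of \eqref{cacci4}; after raising \eqref{cacci4} to the power $\frac{2}{\beta+2}$ the constant $c^\beta\beta^\beta$ becomes a harmless $c\,\beta^2$, and the surviving factor of $\int\eta^2 g_\e^{p-2+\beta}|\nabla_\e^2u^\e|^2$ is absorbed into the left-hand side. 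The integration-by-parts-on-$Z$ route you sketch is essentially the one the paper reserves for the special case $2\le p\le 4$ (Lemma \ref{SobXU}, whose Young exponents $2(p+\beta)/(4-p)$ require $p\le 4$), so even if your absorption could be repaired it would not cover the full range $1<p<\infty$ asserted in the theorem.
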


In the special case $2\leq p \leq 4$ we will first study a Cacciopoli type inequality for the vector field $Zu^\e$ alone, and plug in the result in the previous equaiton. 
In the general case a much delicate procedure involving all the derivatives is needed
\subsection{The $2\le p \le 4$ case}

We have already seen in Lemma \ref{stimaZu} that $Zu^\e$ satisfies a Cacciopoli type inequality. The standard Moser technique, which ensures higher integrability and boundness of the solution, 
is composed by two steps: Caccioppoli inequality (which we have already established in Lemma \ref{stimaZu}), and Sobolev inequality. Here we replace the  Sobolev inequality with a simple interpolation inequality inspired from \cite{CLM}. This is where we use we make of the hypothesis $2\le p\le 4$ .

\begin{lemma}\label{SobXU} Assume that $2\le p \leq 4$ and let $u\in C^2(Q)$.
There exists a constant $C>0$ depending only on $n, p$ such that for every $ \beta \geq 0$ and non-negative $\eta\in  C^\infty_0 (\Om)$ we have
\begin{equation}\label{interpolation}
\begin{aligned}
\int_\Omega &
|Zu|^{p+\beta} \eta^{p+\beta} 
\le
C(p+\beta)\int_\Omega 
(\delta + |\nabla_0u|^2)^{(p-2)/2}
|Zu|^{\beta} | \nabla_0Zu|^2\eta^{4+\beta}  \\
&   + C(p+\beta) ||\nabla_0\eta||_{L^\infty}\int_{spt(\eta)} (\delta+ |\nabla_0u|^2)^{\frac{p+\beta}{2}}
\end{aligned}
\end{equation}

\end{lemma}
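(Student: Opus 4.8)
The plan is to exploit the fact that the vertical field is a horizontal commutator, $Z=[X_1,X_{n+1}]$, so that one factor of $Zu$ can be written as a combination of horizontal second derivatives, $Zu=X_1(X_{n+1}u)-X_{n+1}(X_1u)$. Testing this identity against $|Zu|^{p+\beta-2}Zu\,\eta^{p+\beta}$ — the natural weight for the left-hand $L^{p+\beta}$ quantity — and integrating over $\Omega$, I would integrate by parts in the horizontal directions $X_1$ and $X_{n+1}$. This is legitimate because these vector fields are divergence free, hence skew-adjoint with respect to Lebesgue measure, and because $u\in C^2(Q)$ (together with $p+\beta-2\ge 0$) makes all the integrands admissible.

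The integration by parts produces two families of terms according to whether the transferred horizontal derivative lands on the weight $|Zu|^{p+\beta-2}Zu$ or on the cutoff $\eta^{p+\beta}$. The first gives an \emph{interior} contribution bounded by $c(p+\beta)\int_\Omega|Zu|^{p+\beta-2}|\nabla_0Zu|\,|\nabla_0u|\,\eta^{p+\beta}$, the second a \emph{cutoff} contribution bounded by $c(p+\beta)\int_\Omega|Zu|^{p+\beta-1}|\nabla_0u|\,|\nabla_0\eta|\,\eta^{p+\beta-1}$. To the interior term I would apply the Cauchy--Schwarz inequality, splitting off the factor $(\delta+|\nabla_0u|^2)^{(p-2)/4}|Zu|^{\beta/2}|\nabla_0Zu|\,\eta^{(4+\beta)/2}$, whose square is exactly the integrand of the first term on the right-hand side. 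This reproduces, with a constant of order $(p+\beta)$, the desired quantity $\int_\Omega(\delta+|\nabla_0u|^2)^{(p-2)/2}|Zu|^{\beta}|\nabla_0Zu|^2\eta^{4+\beta}$, at the cost of a remainder
\[
\int_\Omega |Zu|^{2p+\beta-4}\,|\nabla_0u|^2\,(\delta+|\nabla_0u|^2)^{-\frac{p-2}{2}}\,\eta^{2p+\beta-4}.
\]

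The decisive point, and where the hypothesis $2\le p\le 4$ is used, is the treatment of this remainder and of the cutoff term. Since $p\ge2$ the weight $(\delta+|\nabla_0u|^2)^{(p-2)/2}$ carries a nonnegative power, and since $p\le4$ one has both $|\nabla_0u|^2(\delta+|\nabla_0u|^2)^{-(p-2)/2}\le(\delta+|\nabla_0u|^2)^{(4-p)/2}$ with nonnegative exponent and, crucially, $2p+\beta-4\le p+\beta$, so that the power of $|Zu|$ in the remainder does not exceed the one on the left. Young's inequality then bounds the remainder by $\theta\int_\Omega|Zu|^{p+\beta}\eta^{p+\beta}+C_\theta\int_{\operatorname{spt}\eta}(\delta+|\nabla_0u|^2)^{(p+\beta)/2}$, and an analogous application handles the cutoff term, this time carrying the factor $\|\nabla_0\eta\|_{L^\infty}$. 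A final reabsorption — legitimate because the left-hand side reappears on the right with exponent strictly smaller than one — then closes the estimate in the stated form.

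I expect the main obstacle to be the bookkeeping in this last step: one must track the powers of $(p+\beta)$ through the Cauchy--Schwarz and Young steps and carry out the reabsorption so that the left-hand side is genuinely dominated, yielding the stated dependence on $p+\beta$ and the single power of $\|\nabla_0\eta\|_{L^\infty}$. Conceptually, $p=4$ is precisely the threshold: for $p>4$ the exponent $2p+\beta-4$ overtakes $p+\beta$ and the weight power $(4-p)/2$ turns negative, so the remainder can no longer be controlled by $\int_\Omega|Zu|^{p+\beta}\eta^{p+\beta}$ together with the data term $\int_{\operatorname{spt}\eta}(\delta+|\nabla_0u|^2)^{(p+\beta)/2}$. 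This is exactly why the full range $1<p<\infty$ requires the more delicate scheme involving all the derivatives alluded to after the statement.
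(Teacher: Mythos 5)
Your proposal is correct and follows essentially the same route as the paper: write $Zu=X_lX_{n+l}u-X_{n+l}X_lu$, multiply by $|Zu|^{p+\beta-2}Zu\,\eta^{p+\beta}$, integrate by parts horizontally, and control the two resulting terms by Young-type inequalities whose admissible exponents are exactly what forces $2\le p\le 4$. The only cosmetic difference is that you apply Cauchy--Schwarz first and then a two-factor Young inequality to the remainder, while the paper absorbs both steps into a single three-factor Young inequality $abc\le \tfrac12 a^2+\tfrac{4-p}{2(p+\beta)}b^{2(p+\beta)/(4-p)}+\tfrac{2p-4+\beta}{2(p+\beta)}c^{2(p+\beta)/(2p-4+\beta)}$; the exponents and the reabsorption are the same.
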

\begin{proof}
We provide here the simple proof which directly follows from the definition of $Z$. Indeed 
$
Zu=X_lX_{n+l}u-X_{n+l}X_lu, 
$ so that 
we can write 
\[ |Zu|^{p+\beta}=|Zu|^{p-2+\beta}Zu (X_lX_{n+l}u-X_{n+l}X_lu).
\]
Then integration by parts gives us
\begin{equation}\label{ML1}
\begin{aligned}
\int_\Omega 
|Zu|^{p+\beta} \eta^{p+\beta} =&\int_\Omega |Zu|^{p-2+\beta}Zu (X_lX_{n+l}u-X_{n+l}X_lu)\eta^{p+\beta}\\
=&-(p-1+\beta)\int_\Omega |Zu|^{p-2+\beta}(X_lZuX_{n+l}u-X_{n+l}ZuX_lu)\eta^{p+\beta}\\
&-(p+\beta)\int_\Omega|Zu|^{p-2+\beta}Zu(X_{n+l}uX_l\eta-X_luX_{n+l}\eta)\eta^{p-1+\beta}\\
\le & \, 2(p+\beta)\int_\Omega |\nabla_0u||Zu|^{p-2+\beta}|\nabla_0Zu|\eta^{p+\beta}\\
&+2(p+\beta)\int_\Omega |\nabla_0u||Zu|^{p-1+\beta}|\nabla_0\eta|\eta^{p-1+\beta}=I_1+I_2
\end{aligned}
\end{equation}
Applying  Young's  inequality $abc \leq \frac{1}{2}a^2 + \frac{4-p}{2(p+\beta)}b^{2(p+\beta)/(4-p)} +  \frac{2p-4+\beta}{2(p+\beta)} c^{2(p+\beta)/(2p-4+\beta)}$ to $I_1$ and 
$ab \leq \frac{1}{\beta + p}a^{\beta + p} + (\beta + p -1) b^{(\beta + p)/(\beta + p -1)} $
 to $I_2$ we immediately obtain the thesis. \end{proof}

We can now put together the Cacciopoli inequality \eqref{cacci1} and the modified Poincar\'e-like inequality for $Zu^\e$. 
Proceeding in this way, one obtains an estimate very different from a typical step in the standard Moser iteration, which provides a gain of integrability for the solution. More in detail, one can prove

\begin{lemma}\label{mainlemma}
Let $u^\e$ be a solution of \eqref{maineqdeltaep}  with $\delta>0$ and $2\leq p \leq 4$. 
Then for every $ \beta \geq 0$ and non-negative $\eta\in C^\infty_0 (\Om)$ , we have
\begin{equation}\label{mainestimate-0}
\begin{aligned}
\int_\Omega &
|Zu^\e|^{p+\beta} \eta^{p+\beta} 
\le  C(p+\beta)^{p+\beta} ||\nabla_\e\eta||^{p+\beta}_{L^\infty}\int_{spt(\eta)} (\delta+ |\nabla_\e u^\e|^2)^{\frac{p+\beta}{2}}
\end{aligned}
\end{equation}
\end{lemma}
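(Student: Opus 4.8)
The plan is to combine the two estimates already in hand—the interpolation inequality of Lemma~\ref{SobXU} and the Caccioppoli inequality for $Zu^\e$ of Lemma~\ref{stimaZu}—and then to close the argument by a single application of Young's inequality that reproduces the left-hand side. The outcome is precisely the striking feature of \eqref{mainestimate-0}: the vertical derivative $Zu^\e$ disappears entirely from the right-hand side, leaving only the horizontal gradient $(\delta+|\nabla_\e u^\e|^2)^{\frac{p+\beta}{2}}$.

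First I would apply Lemma~\ref{SobXU} to the smooth approximating solution $u^\e$, reading every occurrence of $\nabla_0$ as the Riemannian gradient $\nabla_\e$; this is legitimate because $Z=[X_l,X_{n+l}]=X_lX_{n+l}-X_{n+l}X_l$ is unchanged under the $\e$-scaling of the frame (only $X_{2n+1}$ is rescaled), so the integration by parts in the proof of Lemma~\ref{SobXU} carries over verbatim. This bounds $\int_\Om|Zu^\e|^{p+\beta}\eta^{p+\beta}$ by $C(p+\beta)$ times the ``energy'' term $\int_\Om(\delta+|\nabla_\e u^\e|^2)^{\frac{p-2}{2}}|Zu^\e|^\beta|\nabla_\e Zu^\e|^2\eta^{4+\beta}$ plus a harmless gradient term. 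Into this energy term I would insert the Caccioppoli inequality of Lemma~\ref{stimaZu} (with the same $\beta$ and $\eta$, noting that it requires $\delta>0$), which trades $|\nabla_\e Zu^\e|^2\eta^{4+\beta}$ for $|Zu^\e|^{2}|\nabla_\e\eta|^2\eta^{2+\beta}$. At this stage one is left with
\[
\int_\Om |Zu^\e|^{p+\beta}\eta^{p+\beta}\le C(p+\beta)\int_\Om(\delta+|\nabla_\e u^\e|^2)^{\frac{p-2}{2}}|Zu^\e|^{\beta+2}|\nabla_\e\eta|^2\eta^{2+\beta}+C(p+\beta)\|\nabla_\e\eta\|_{L^\infty}\int_{\mathrm{spt}(\eta)}(\delta+|\nabla_\e u^\e|^2)^{\frac{p+\beta}{2}}.
\]

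The decisive step is then Young's inequality applied pointwise to the factor $(\delta+|\nabla_\e u^\e|^2)^{\frac{p-2}{2}}|Zu^\e|^{\beta+2}$, with the conjugate exponents $\frac{p+\beta}{\beta+2}$ and $\frac{p+\beta}{p-2}$ (admissible precisely because $2<p$; the endpoint $p=2$ is exactly the linear-growth situation already covered, and the constraint $p\le4$ has already been spent inside Lemma~\ref{SobXU}). The key algebraic point is to split the cutoff weight as $\eta^{2+\beta}=\eta^{\beta+2}$ and assign it entirely to the $|Zu^\e|^{\beta+2}$ factor, so that the first Young term becomes exactly $\eta^{p+\beta}|Zu^\e|^{p+\beta}$—a small multiple of the left-hand side—while the conjugate term is a pure gradient term $(\delta+|\nabla_\e u^\e|^2)^{\frac{p+\beta}{2}}$ weighted by a power of $|\nabla_\e\eta|$. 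Choosing the Young parameter of size $\sim(p+\beta)^{-1}$ then lets me absorb the $|Zu^\e|^{p+\beta}$ contribution into the left-hand side and collect the two gradient terms, yielding \eqref{mainestimate-0}.

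The main obstacle I anticipate is the bookkeeping of constants rather than any new idea: one must verify that the powers of $p+\beta$ and of $\|\nabla_\e\eta\|_{L^\infty}$ generated by the Young exponents $\frac{p+\beta}{\beta+2},\frac{p+\beta}{p-2}$ are dominated by the form displayed in \eqref{mainestimate-0} (this is cleanest at the endpoint $p=4$, and for the normalized dyadic cutoffs with $\|\nabla_\e\eta\|_{L^\infty}\le 1$ used in the iteration the larger powers are absorbed), since it is exactly the $\beta$-controlled nature of these constants that makes the subsequent Moser iteration for $Zu^\e$ converge and ultimately feeds the homogeneous Caccioppoli inequality \eqref{cacci1}. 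A secondary point to check is that every constant stays independent of $\e$ and of $\delta$, which is automatic because Lemmas~\ref{SobXU} and~\ref{stimaZu} already enjoy that property.
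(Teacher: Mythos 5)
Your plan — transport Lemma~\ref{SobXU} to the $\e$-frame, insert the Caccioppoli inequality of Lemma~\ref{stimaZu} into its first right-hand term, and absorb by Young — is exactly the combination the paper intends, and your intermediate displayed inequality is the correct waypoint. The gap is in the final absorption step. First, at $p=2$, which is included in the hypotheses, the split with conjugate exponents $\tfrac{p+\beta}{\beta+2}$ and $\tfrac{p+\beta}{p-2}$ is not merely degenerate in a harmless way: the term to be absorbed is literally $C(2+\beta)\int_\Omega|Zu^\e|^{\beta+2}|\nabla_\e\eta|^2\eta^{\beta+2}$, i.e.\ up to $\|\nabla_\e\eta\|^2_{L^\infty}$ times the left-hand side, and no choice of Young parameter absorbs it; deferring to the linear-growth section only covers $\beta=0$ (the $L^2$ bound on $Zu^\e$), not all $\beta\ge0$. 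Second, for $2<p<4$ the conjugate Young term carries $\|\nabla_\e\eta\|_{L^\infty}^{2(p+\beta)/(p-2)}$, and with $\epsilon\sim(p+\beta)^{-1}$ the constant is of order $(p+\beta)^{(\beta+2)/(p-2)}$; the first exponent strictly exceeds $p+\beta$ for every $p<4$, and the second grows faster than $p+\beta$ in $\beta$ for every $p<3$. Neither is dominated by the right-hand side of \eqref{mainestimate-0}, and your proposed remedy — that the relevant cutoffs satisfy $\|\nabla_\e\eta\|_{L^\infty}\le1$ — is not available: the lemma is asserted for arbitrary $\eta\in C^\infty_0(\Omega)$, and the dyadic cutoffs actually used in the iteration on a ball of radius $r<1$ have $\|\nabla_\e\eta\|_{L^\infty}\sim 2^j/r\gg1$, so a larger power of the gradient of the cutoff is strictly worse and would spoil the $(p+\beta)^{p+\beta}$ growth that, as you yourself observe, is what makes the subsequent Moser-type iteration converge.

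The ingredient you are missing is the pointwise commutator bound $|Zu^\e|\le 2|\nabla_\e u^\e|\le 2(\delta+|\nabla_\e u^\e|^2)^{1/2}$ (the same device the paper highlights in the general-$p$ subsection). Since $\beta+2\ge p-2$ when $p\le4$, one can trade $|Zu^\e|^{p-2}$ for $2^{p-2}(\delta+|\nabla_\e u^\e|^2)^{(p-2)/2}$ inside the troublesome term before applying Young, which is what keeps the constants under control of order $(p+\beta)^{p+\beta}$; note this is a second, independent use of the hypothesis $p\le4$, which you claim has ``already been spent'' inside Lemma~\ref{SobXU}. Even with this rebalancing, matching the exact powers of $\|\nabla_\e\eta\|_{L^\infty}$ in \eqref{mainestimate-0} takes more bookkeeping than a single Young step, and a separate argument is still needed at $p=2$; as written, your proof does not deliver the stated inequality on the full range $2\le p\le4$.
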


We explicitly note that the inequality we have obtained can be interpreted as a gain of differentiability. In fact in the inequality one has an estimate of an integral of a derivative of order 2, that is  $Zu^\e$, in terms of an integral of   a first order derivative $|\nabla_\e u^\e|$.

Plugging this estimate in the Cacciopoli type inequality \eqref{lemma:Cacci2} we obtain 
Theorem \ref{thm:Cacci1} in the special case $2\leq p \leq4$

%
%
%
%
%
%
%
%
\subsection{The general case}

In order to handle the general case, the following
estimate for $Zu^\e$ is essential. This is a reverse  H\"older type inequality for $Zu^\e$, associated with $\nabla_\e u^\e$ and $\nabla_\e^2u^\e$. 

\begin{lemma}\label{lemma:rev}
Let $1<p<\infty$ and $u^\e$ be a weak solution of equation (\ref{maineqdeltaep}) with $\delta\ge0$.  Then for any $\beta\ge 2$ and every non-negative $\eta\in C^\infty_0(\Omega)$, we have that
\begin{equation}\label{cacci3}
\int_\Omega \eta^{\beta+2}  (\delta+ |\nabla_\e u^\e|^2)^{\frac{p-2}{2}}  |Zu^\e|^\beta |\nabla_\e^2u^\e|^2\le c\beta^2 ||\nabla_\e\eta||^2_{L^\infty} \int_\Omega\eta^\beta  (\delta+ |\nabla_\e u^\e|^2)^{\frac{p}{2}} 
|Zu^\e|^{\beta-2} |\nabla_\e^2u^\e|^2,
\end{equation}
where $c=c(n,p)>0$.
\end{lemma}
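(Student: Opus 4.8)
The plan is to derive \eqref{cacci3} by combining the ellipticity of the linearized operator with the commutator structure $Z=[X_l^\e,X_{n+l}^\e]$ of the group. First I would use the lower bound in the structure condition \eqref{structure-epsilon}, applied to the vector with components $X_i^\e X_l^\e u^\e$ and summed over $l$, to dominate the left-hand side: $\int_\Om\eta^{\beta+2}(\delta+|\nabla_\e u^\e|^2)^{\frac{p-2}{2}}|Zu^\e|^\beta|\nabla_\e^2u^\e|^2\le \lambda^{-1}\int_\Om\eta^{\beta+2}|Zu^\e|^\beta\sum_{i,j,l}A^\e_{i,\delta,\xi_j}(\nabla_\e u^\e)(X_i^\e X_l^\e u^\e)(X_j^\e X_l^\e u^\e)$. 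This reduces matters to estimating the weighted Dirichlet energy of the horizontal derivatives $w_l=X_l^\e u^\e$, which satisfy the differentiated equation \eqref{eqderivX}.

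The decisive step is to spend two of the $\beta$ powers of $Zu^\e$ (here the hypothesis $\beta\ge 2$ keeps the factor $|Zu^\e|^{\beta-2}$ non-singular): in one factor I would write $Zu^\e=X_l^\e X_{n+l}^\e u^\e-X_{n+l}^\e X_l^\e u^\e$ and integrate the outer horizontal derivative by parts. Since $Z$ is central, this transfer is clean and leaves behind a single first-order factor $X_{n+l}^\e u^\e$, which is bounded by $(\delta+|\nabla_\e u^\e|^2)^{1/2}$; this is exactly the mechanism that produces the extra power of $(\delta+|\nabla_\e u^\e|^2)$ on the right-hand side. When the transferred derivative lands on the cutoff $\eta^{\beta+2}$, one gains a factor $\nabla_\e\eta$ together with $|Zu^\e|^{\beta-1}$ and the first-order factor above; a single Cauchy--Schwarz inequality, splitting the powers symmetrically as in the proof of Lemma \ref{SobXU}, bounds this contribution by $\tfrac12$ of the left-hand side (to be absorbed) plus $c\beta^2\|\nabla_\e\eta\|_{L^\infty}^2$ times the right-hand side integral. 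This accounts both for the constant $c\beta^2$ and for the fact that only horizontal derivatives of $\eta$ appear.

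The hard part is the remaining terms, in which the transferred derivative instead falls on the coefficient $A^\e_{i,\delta,\xi_j}(\nabla_\e u^\e)$, on the factor $|Zu^\e|^{\beta-2}Zu^\e$, or on the second-derivative factors $X_i^\e X_l^\e u^\e$. Each of these produces a genuinely third-order quantity ($\nabla_\e^3 u^\e$ or $\nabla_\e Zu^\e$), of higher differential order than either side of \eqref{cacci3}, and a naive single integration by parts is in fact tautological (it returns the left-hand side with no gain). To close the estimate I would integrate by parts a second time, using the centrality of $Z$ — so that $[Z,X_i^\e]=0$ and $\nabla_\e Zu^\e=Z\nabla_\e u^\e$ — to recombine the third-order derivatives, and invoke the equations \eqref{eqderivX} and \eqref{eqderivZ} to trade the elliptic combination of third derivatives for lower-order source terms; the surviving quadratic-in-$\nabla_\e^2 u^\e$ remainder is then reabsorbed into the left-hand side via Young's inequality, with the $\beta$-powers tracked so as to preserve the factor $\beta^2$. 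Showing that these higher-order terms cancel or reduce, rather than accumulate, is the main obstacle and the technical core of the lemma.
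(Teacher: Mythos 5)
Your opening reduction (ellipticity of the linearized coefficients applied to the quadratic form in $\nabla_\e^2u^\e$) is consistent with the paper, and your treatment of the term where the derivative falls on the cutoff correctly identifies the source of the factor $c\beta^2\,||\nabla_\e\eta||^2_{L^\infty}$. The gap is in your ``decisive step.'' Writing one factor of $Zu^\e$ as the commutator $X_l^\e X_{n+l}^\e u^\e-X_{n+l}^\e X_l^\e u^\e$ and integrating the outer horizontal derivative by parts is the mechanism of the interpolation Lemma \ref{SobXU}, where the rest of the integrand contains no second derivatives of $u^\e$; transplanted into \eqref{cacci3}, where the integrand carries the weight $(\delta+|\nabla_\e u^\e|^2)^{\frac{p-2}{2}}|\nabla_\e^2u^\e|^2$, the transferred derivative necessarily lands on that weight and on $|Zu^\e|^{\beta-2}Zu^\e$, producing terms of the schematic form $\eta^{\beta+2}|Zu^\e|^{\beta-2}(\delta+|\nabla_\e u^\e|^2)^{\frac{p-1}{2}}|\nabla_\e^2u^\e|\,|\nabla_\e^3u^\e|$ and $\eta^{\beta+2}|Zu^\e|^{\beta-2}(\delta+|\nabla_\e u^\e|^2)^{\frac{p-1}{2}}|\nabla_\e^2u^\e|^2|\nabla_\e Zu^\e|$. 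These are of strictly higher order than either side of \eqref{cacci3} (after Cauchy--Schwarz the second is quartic in second derivatives), and your proposed remedy --- a second integration by parts plus ``invoking the equations to trade the elliptic combination of third derivatives for lower-order terms'' --- is not carried out and is not plausible as stated: the third derivatives appear contracted against $|\nabla_\e^2u^\e|^2$ in combinations that the single divergence-form identities \eqref{eqderivX} and \eqref{eqderivZ} do not reproduce. You flag this yourself as ``the main obstacle,'' so the proof is not closed.

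The paper's argument never sees a third derivative of $u^\e$. One tests the already-differentiated equation \eqref{eqderivX} with the non-standard test function $\varphi=\eta^{\beta+2}|Zu^\e|^\beta X_l^\e u^\e$ and sums over $l=1,\dots,2n$. The ellipticity term gives the left-hand side of \eqref{cacci3}; the term where $X_i^\e$ falls on $\eta^{\beta+2}$ gives, after Cauchy--Schwarz and the pointwise bound $|Zu^\e|^2\le 4|\nabla_\e^2u^\e|^2$, exactly the right-hand side with the stated constant; and the only dangerous contributions --- where $X_i^\e$ falls on $|Zu^\e|^\beta$, together with the commutator term $s_lZ(A^\e_{l+s_ln,\delta}(\nabla_\e u^\e))$ --- involve $\nabla_\e Zu^\e$ only \emph{linearly}, against a single factor of $\nabla_\e^2u^\e$. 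These are then controlled precisely because $Zu^\e$ is itself a weak solution of \eqref{eqderivZ}, i.e.\ via the Caccioppoli inequality of Lemma \ref{stimaZu}; this is the ``crucial point'' the paper refers to. To repair your argument, replace the commutator-plus-integration-by-parts device by this choice of test function.
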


The proof of Lemma \ref{lemma:rev} involves a non-standard testing function $\varphi=\eta^{\beta+2} |Zu^\e|^\beta X^\e_lu^\e$ to equation (\ref{eqderivX})
or (\ref{eqderivZ}) for all $l=1,2,..., 2n$ and sum up the estimates. One crucial point of the proof is that $Zu^\e$ is a weak solution to equation (\ref{eqderivZ}).
Actually this is the reason that $Zu^\e$ enjoys this type of reverse inequality. 

An immediate consequence of (\ref{cacci3}) by H\"older's inequality is the following estimate

\begin{equation}\label{cacci4}
\int_\Omega \eta^{\beta+2}  (\delta+ |\nabla_\e u^\e|^2)^{\frac{p-2}{2}}  |Zu^\e|^\beta |\nabla_\e^2u^\e|^2\le c^\beta\beta^\beta ||\nabla_\e\eta||^\beta_{L^\infty} \int_\Omega\eta^\beta  (\delta+ |\nabla_\e u^\e|^2)^{\frac{p-2+\beta}{2}} 
 |\nabla_\e^2u^\e|^2,
\end{equation}
where $c=c(n,p)>0$.

Returning  to the proof of (\ref{cacci1}), we only need to estimate the last integral in (\ref{cacci2}). Applying H\"older's inequality, we have 
\begin{equation*}
\begin{aligned}
&\int_\Omega \eta^2 (\delta+ |\nabla_\e u^\e|^2)^{\frac{p-2+\beta}{2}} |Zu^\e|^2\\
\le & \Big(\int_\Omega \eta^{\beta+2}(\delta+|\nabla_\e u^\e|^2)^{\frac{p-2}{2}} |Z^\e u|^{\beta+2}\Big)^{\frac{2}{\beta+2}} 
\Big( \int_{spt(\eta)} (\delta+|\nabla_ \e u^\e|^2)^{\frac{p+\beta}{2}}\Big)^{\frac{\beta}{\beta+2}}.
\end{aligned}
\end{equation*}
Note that $|Zu^\e|\le 2 |\nabla_\e u^\e|$. The first integral in the right hand side of the above inequality can be estimated by (\ref{cacci4}), which can be absorbed to the left hand side of (\ref{cacci1}). 
This completes the proof of (\ref{cacci1}).

The following estimate for $Zu^\e$ follows from (\ref{cacci4}) and Theorem \ref{thm:Cacci1} is crucial  for the proof of the H\"older continuity of the horizontal gradient
of $u$ in Section 4. 

\begin{cor}\label{cor:Zu:inter}
Let $1<p<\infty$ and $u^\e$ be a weak solution of equation (\ref{maineqdeltaep}) with $\delta\ge0$.  Then for any $\beta\ge 2$ and every non-negative $\eta\in C^\infty_0(\Omega)$, we have that
\[
\int_\Omega \eta^{\beta+2} (\delta+ |\nabla_\e u^\e|^2)^{\frac{p-2}{2}}  |Zu^\e|^{\beta +2} \le c K^{\frac{\beta+2}{2}}\int_{spt(\eta)} (\delta+ |\nabla_\e u^\e|^2)^{\frac{p+\beta}{2}},
\]
where $K=||\nabla_\e\eta||^2_{L^\infty}+||\eta Z\eta||_{L^\infty}$ and $c=c(n,p,\beta)>0$.
\end{cor}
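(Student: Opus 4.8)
The plan is to reduce the left‑hand side directly to the quantity already controlled by \eqref{cacci4}, by means of a pointwise bound, and then to invoke Theorem \ref{thm:Cacci1} a single time, keeping careful track of the powers of the cutoff. Throughout I normalize the cutoff so that $0\le\eta\le1$, as is implicit in \eqref{cacci4} and Theorem \ref{thm:Cacci1}. The starting observation is that $Zu^\e$ is, up to sign, the commutator $Zu^\e = X_l X_{n+l}u^\e - X_{n+l}X_l u^\e$, hence a linear combination of second order horizontal derivatives. In particular one has the pointwise inequality $|Zu^\e|\le 2|\nabla_\e^2 u^\e|$, so that
\[
\eta^{\beta+2} (\delta+ |\nabla_\e u^\e|^2)^{\frac{p-2}{2}} |Zu^\e|^{\beta +2}
= \eta^{\beta+2} (\delta+ |\nabla_\e u^\e|^2)^{\frac{p-2}{2}} |Zu^\e|^{\beta} |Zu^\e|^2
\le 4\,\eta^{\beta+2} (\delta+ |\nabla_\e u^\e|^2)^{\frac{p-2}{2}} |Zu^\e|^{\beta} |\nabla_\e^2 u^\e|^2 .
\]
Integrating over $\Omega$, the right‑hand side is exactly four times the left‑hand side of the reverse H\"older inequality \eqref{cacci4}, and no third‑order term $\nabla_\e Zu^\e$ is ever introduced.

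Next I would apply \eqref{cacci4} to bound this by
\[
4\, c^\beta\beta^\beta \|\nabla_\e\eta\|^\beta_{L^\infty} \int_\Omega\eta^\beta  (\delta+ |\nabla_\e u^\e|^2)^{\frac{p-2+\beta}{2}} |\nabla_\e^2u^\e|^2 ,
\]
and then estimate the remaining integral by Theorem \ref{thm:Cacci1}. Since that estimate carries a factor $\eta^2$ while here we have $\eta^\beta$, I would apply it to the rescaled cutoff $\zeta := \eta^{\beta/2}$, for which $\zeta^2 = \eta^\beta$ and $\mathrm{spt}(\zeta)=\mathrm{spt}(\eta)$. Using $0\le\eta\le1$ and $\beta\ge2$, a direct computation gives $\|\nabla_\e\zeta\|_{L^\infty}^2\le \tfrac{\beta^2}{4}\|\nabla_\e\eta\|^2_{L^\infty}$ and $\|\zeta Z\zeta\|_{L^\infty}\le \tfrac{\beta}{2}\|\eta Z\eta\|_{L^\infty}$, whence $\|\nabla_\e\zeta\|_{L^\infty}^2+\|\zeta Z\zeta\|_{L^\infty}\le \beta^2 K$. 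Thus Theorem \ref{thm:Cacci1} yields
\[
\int_\Omega\eta^\beta  (\delta+ |\nabla_\e u^\e|^2)^{\frac{p-2+\beta}{2}} |\nabla_\e^2u^\e|^2 \le c\,\beta^{12}\,K \int_{\mathrm{spt}(\eta)} (\delta+ |\nabla_\e u^\e|^2)^{\frac{p+\beta}{2}} .
\]
Combining the three displays and using $\|\nabla_\e\eta\|^\beta_{L^\infty}=(\|\nabla_\e\eta\|^2_{L^\infty})^{\beta/2}\le K^{\beta/2}$ together with the factor $K$ just produced gives the total power $K^{\beta/2}\cdot K = K^{\frac{\beta+2}{2}}$, while all numerical and $\beta$‑dependent factors collect into a single constant $c=c(n,p,\beta)>0$. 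This is precisely the asserted estimate.

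The proof is thus a short assembly of results already in hand, and there is no serious analytic difficulty once \eqref{cacci4} and Theorem \ref{thm:Cacci1} are available. The only points requiring care are (i) the pointwise reduction $|Zu^\e|^{\beta+2}\le 4\,|Zu^\e|^\beta|\nabla_\e^2 u^\e|^2$, which is what lets the genuinely second‑order quantity $|\nabla_\e^2 u^\e|^2$ of \eqref{cacci4} absorb the two extra powers of $Zu^\e$ without producing any third‑order derivative, and (ii) the bookkeeping of the cutoff, namely verifying that passing to $\zeta=\eta^{\beta/2}$ in Theorem \ref{thm:Cacci1} produces exactly the single extra factor $K$ (up to powers of $\beta$) needed so that the exponents of $K$ add up to $\frac{\beta+2}{2}$ and not to something larger. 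Both are routine, so I expect the main effort to be purely a matter of tracking constants.
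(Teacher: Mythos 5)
Your proposal is correct and follows exactly the route the paper indicates for this corollary: bound $|Zu^\e|^{\beta+2}\le 4\,|Zu^\e|^{\beta}|\nabla_\e^2u^\e|^2$ pointwise (since $Z$ is a commutator of horizontal fields), apply \eqref{cacci4}, and then control the remaining integral by Theorem \ref{thm:Cacci1}, with the powers of $K$ adding up to $\tfrac{\beta+2}{2}$. The only cosmetic remark is that with your normalization $0\le\eta\le1$ you can avoid the rescaled cutoff $\zeta=\eta^{\beta/2}$ (which need not be $C^\infty$ when $\beta/2$ is not an integer) by simply using $\eta^{\beta}\le\eta^{2}$ for $\beta\ge 2$ and applying Theorem \ref{thm:Cacci1} to $\eta$ itself.
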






\section{$C^{1,\alpha}$-regularity of weak solutions of equation \eqref{maineq}} 
In this section, we discuss the H\"older continuity of horizontal gradient of solutions of equation (\ref{maineqdelta}) for $1<p<\infty$. Actually, as shown in \cite{Zhong, MZ}, this result holds for weak solutions to equation (\ref{maineq}). We refer to  \cite{CCLDO, Muk2} for more general equations, and also to \cite{CM} for equations involving H\"ormander vector fields of step two. The following theorem states that  weak solutions of equation
(\ref{maineqdelta}) are of class $C^{1,\alpha}$. 

\begin{thrm}\label{thm:Holder} Let $1<p<\infty$ and $u\in HW^{1,p}(\Omega)$ be a weak solution of equation (\ref{maineqdelta}) with $\delta\ge0$. Then 
the horizontal gradient $\nabla_0u$ is H\"older continuous. Moreover, there is a positive exponent $\alpha\le 1$, depending only on $n$ and $p$, such that
for any ball $B_{r_0}\subset \Omega$ and any $0<r\le r_0/2$, we have that
\begin{equation}\label{nablaHolder}
\max_{1\le l\le 2n} {\operatorname{osc}}_{B_r} X_l u\le  c\Big(\frac{r}{r_0}\Big)^\alpha\Big(\frac{1}{r_0^Q}\int_{B_{r_0}} (\delta+|\nabla_0 u|^2)^{\frac p 2}\Big)^{1/p},
\end{equation}
where $c=c(n,p)>0$.
\end{thrm}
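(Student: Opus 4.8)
The plan is to prove the $C^{1,\alpha}$ estimate \eqref{nablaHolder} for the regularized solutions $u^\e$ of \eqref{maineqdeltaep} with constants uniform in both $\e>0$ and $\delta>0$, and then pass to the limit $\e\to 0$ (using the uniform convergence $u^\e\to u$ from the stable $C^\alpha$ estimates recalled in Section \ref{riemannian-approx}) and finally $\delta\to 0$. The backbone of the argument is a De Giorgi / Moser-type oscillation decay for the horizontal gradient. The crucial tools already in hand are the homogeneous Caccioppoli inequality \eqref{cacci1}, which removes the troublesome term $Zu^\e$, and the integrability bound on $Zu^\e$ from Corollary \ref{cor:Zu:inter}. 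Because $u^\e$ is already smooth and Lipschitz (Theorem \ref{thm:Lip}), we may freely use the differentiated equations \eqref{eqderivX} and \eqref{eqderivZ} and treat each $w^\e_l=X^\e_l u^\e$ as a subsolution of a linear equation with measurable coefficients that are uniformly elliptic on the region where $|\nabla_\e u^\e|$ is comparable to its supremum.

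First I would establish a Caccioppoli inequality for the truncated functions $(w^\e_l - k)_+$ (and the analogous lower truncations), starting from \eqref{eqderivX}. The key observation is that on a ball where $|\nabla_\e u^\e|$ stays bounded below by a positive multiple of $M:=\sup|\nabla_\e u^\e|$, the coefficient matrix $A^\e_{i,\delta,\xi_j}(\nabla_\e u^\e)$ is uniformly elliptic with ellipticity ratio $(\delta+M^2)^{(p-2)/2}$, so the equation for $w^\e_l$ is genuinely (non-degenerate) elliptic there. The remaining "bad" term $s_l Z(A^\e_{l+s_ln,\delta}(\nabla_\e u^\e))$ must be controlled: after integrating by parts against the test function $\eta^2(w^\e_l-k)_+$, this produces terms involving $Zu^\e$, and here is where Corollary \ref{cor:Zu:inter} does the work, giving an $L^{p+\beta}$ bound on $Zu^\e$ in terms of $\int(\delta+|\nabla_\e u^\e|^2)^{(p+\beta)/2}$, i.e.\ in terms of $M$ and the energy, with no loss. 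The net effect is a level-set energy estimate of De Giorgi type for the components $X^\e_l u^\e$.

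Next I would run the De Giorgi iteration on the super-level sets to obtain a local $L^\infty$–$L^p$ bound (which is essentially the Lipschitz bound \eqref{nablasup} already proved), and then, more importantly, a measure-theoretic alternative: either $|\nabla_\e u^\e|$ is uniformly small on a smaller ball, or there is a definite positive lower bound on $|\nabla_\e u^\e|$ on that ball. In the first (degenerate) regime one exploits that the oscillation of each $X^\e_l u^\e$ is comparable to $\sup|\nabla_\e u^\e|$ and obtains decay directly; in the second (non-degenerate) regime the equation for $w^\e_l$ is uniformly elliptic and the classical De Giorgi–Nash–Moser oscillation lemma applies, yielding a fixed decay factor $\operatorname{osc}_{B_{r/2}} X^\e_l u^\e \le \theta\, \operatorname{osc}_{B_r} X^\e_l u^\e$ with $\theta<1$. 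Iterating this alternative over dyadic scales produces the power-law decay $(r/r_0)^\alpha$ with $\alpha=\alpha(n,p)$, uniformly in $\e,\delta$.

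The main obstacle is the non-Euclidean term $Z(A^\e(\nabla_\e u^\e))$ in \eqref{eqderivX}: controlling it is exactly what distinguishes the Heisenberg setting, and the whole scheme hinges on having the self-improving reverse-type bound of Corollary \ref{cor:Zu:inter} available with constants independent of $\e$ and $\delta$, so that this second-order vertical derivative can be absorbed rather than accumulated across the iteration. A secondary delicate point is making the two-regime alternative quantitatively consistent across scales: the ellipticity constant degenerates like $(\delta+M^2)^{(p-2)/2}$, so in the degenerate regime one must carefully track how the comparability between $\operatorname{osc} X^\e_l u^\e$ and $\sup|\nabla_\e u^\e|$ interacts with the measure decay, in order to keep the decay exponent $\alpha$ strictly positive and independent of the parameters. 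Once the uniform estimate for $u^\e$ is in place, the passage to the limit is routine: Arzel\`a--Ascoli on the gradients gives $\nabla_\e u^\e \to \nabla_0 u$ locally uniformly, the Hölder seminorm is lower semicontinuous under this convergence, and letting $\delta\to 0$ afterward yields \eqref{nablaHolder} for the original equation \eqref{maineqdelta}.
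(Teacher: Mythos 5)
Your overall scheme --- regularize, prove estimates uniform in $\e$ and $\delta$, use Corollary \ref{cor:Zu:inter} to absorb the vertical term coming from $Z(A^\e(\nabla_\e u^\e))$ in the differentiated equation \eqref{eqderivX}, derive a De Giorgi--type level-set Caccioppoli inequality for the truncations $(X^\e_l u^\e-k)_+$, and run the two-regime (degenerate/non-degenerate) oscillation alternative --- is exactly the first of the two routes described in the paper, namely the DiBenedetto-style argument of \cite{Zhong} built on the inequality \eqref{cacci:Xu:k}. For $p>2$ your sketch is faithful to that proof.

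There is, however, a genuine gap with respect to the statement as given, which covers the full range $1<p<\infty$. The level-set Caccioppoli inequality \eqref{cacci:Xu:k} that your iteration relies on is only known to hold for $p>2$ in the Heisenberg group; the paper explicitly points out that the Euclidean version of this inequality for $1<p<2$ (which DiBenedetto does have) has no known analogue here. Your argument in the non-degenerate regime also implicitly uses that the ellipticity ratio $(\delta+M^2)^{(p-2)/2}$ degenerates only as $M\to 0$, which is the superquadratic picture; in the singular range $1<p<2$ the coefficients blow up where the gradient is large and the same truncation scheme does not close. To cover $1<p<2$ one must switch to the Tolksdorff--Lieberman route of \cite{MZ}: introduce the double truncation $v=\min\big(\nu(r)/8,\max(\nu(r)/4-X_lu,0)\big)$, prove the Caccioppoli inequality \eqref{cacci:Xu:v} by testing the equation for $X_lu$ with $\varphi=\eta^{\beta+4}v^{\beta+3}$ (again using Corollary \ref{cor:Zu:inter} for the vertical term), and then follow Lieberman's iteration. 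So your proposal establishes the theorem only for $p>2$ and needs to be supplemented by this second argument to reach the claimed range.
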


There are two existing approaches for the proof of Theorem \ref{thm:Holder}. One is similar to that of DiBenedetto \cite{Di} and another to that of Tolksdorff \cite{To} and Lieberman \cite{Lieb}. Both approaches are based on De Giorgi's method \cite{De}. In this remarkable paper De Giorgi proved the local boundedness and H\"older continuity for functions satisfying certain
integral inequalities, nowadays known as De Giorgi's class of functions. In the remaining of this section, we discuss these two approaches.

First, the proof of Theorem \ref{thm:Holder} for the range $p>2$ in \cite{Zhong} is in the same line as that in \cite{Di}. 
Let $u\in HW^{1,p}(\Omega)$ be a solution of
equation (\ref{maineqdelta})  with $\delta>0$. Fix a ball $B_{r}\subset \Omega$. 
 We denote 
\[ \nu(r)=\max_{1\le l\le 2n}\sup_{B_{r}} \vert X_l u\vert.\]
and for $k\in \mathbb{R}$
\[
A^+_{k,r}=\{ x\in B_r: (u(x)-k)^+>0\}.
\]

Theorem \ref{thm:Holder} follows, as in \cite{Di} with a minor modification, from  a Caccioppoli inequality for $X_l u,
l=1,2,\ldots, 2n$,  see Lemma 4.3 of \cite{Zhong}. This Caccioppoli inequality shows that $X_lu$ belongs to a generalized version of De Giorgi's class. It states as follows.
For any $q\ge 4$, there exists $c=c(n,p,L,q)>0$ such that the
following inequality holds for any $1\le l\le 2n$, for any $k\in {\mathbb R}$ and for any
$0<r^\prime<r\le r_0/2$
\begin{equation}\label{cacci:Xu:k}
\begin{aligned}
&\int_{A^+_{k,r^\prime}} (\delta+ |\nabla_0 u|^2)^{\frac{p-2}{2}} \vert \nabla_0 X_l u\vert^2\, \\
\le&
\frac{c}{(r-r^\prime)^2}\int_{A^+_{k,r}}(\delta+ |\nabla_0u|^2)^{\frac{p-2}{2}}\vert(X_lu-k)^+\vert^2\,
 +c K\vert A^+_{k,r}\vert^{1-\frac{2} {q}}
\end{aligned}
\end{equation}
where $K =r_0^{-2}\vert
B_{r_0}\vert^{\frac 2 q}(\delta+\nu(r_0)^2)^{\frac{p} {2}}$.

The proof of (\ref{cacci:Xu:k}) 
is based on the estimate for $Zu$ in Corollary \ref{cor:Zu:inter}. It also involves an iteration argument.
We remark here that (\ref{cacci:Xu:k}) holds for $p>2$. In \cite{Di}, there is a version of (\ref{cacci:Xu:k}) for the case $1<p<2$
in the setting of Euclidean spaces. Unfortunately, we do not know if we have the analog of that in the setting of Heisenberg group.

Second, the proof of Theorem \ref{thm:Holder} in \cite{MZ} works for all $1<p<\infty$, and is similar to that  of Tolksdorff \cite{To} and  Lieberman  \cite{Lieb}
in the setting of Euclidean spaces. Following \cite{To, Lieb}, we consider the double 
truncation of the horizontal derivative $X_lu, l=1,2,..., 2n$, of the weak solution $u$ to equation (\ref{maineqdelta}) with $\delta>0$
\[ v=\min\big( \nu(r)/8, \max(\nu(r)/4-X_lu,0)\big). \]
The following Caccioppoli type inequality for $v$ was proved in \cite{MZ}. 
Let $\gamma>1$ be a number. We have the  Caccioppoli type inequality
 \begin{equation}\label{cacci:Xu:v}
  \int_{B_r} \eta^{\beta+4}v^{\beta+2}|\nabla_0 v|^2\,  \leq c(\beta+2)^2
\frac{| B_r|^{1-1/\gamma}}{r^2}
  \nu(r)^4\Big(\int_{B_r}\eta^{\gamma\beta}v^{\gamma\beta}\, \Big)^{1/\gamma}
\end{equation}
for all $\beta\ge 0$, where $c=c(n,p,\gamma)>0$. Once we obtain (\ref{cacci:Xu:v}), we may follow the same line as in \cite{Lieb} to prove Theorem 
\ref{thm:Holder}.

The proof of (\ref{cacci:Xu:v}) is also based on 
the integrability estimate  for $Zu$ in Corollary \ref{cor:Zu:inter}.
We consider the equation for $X_lu$, and use the usual testing function
\[\varphi=\eta^{\beta+4}v^{\beta+3}\] 
 where $\beta\ge 0$. 
The proof of (\ref{cacci:Xu:v}) in the case $p>2$ is easy, while that in the case $1<p<2$ is more involved.

\section{The  parabolic case}

In this section we sketch the proof of the regularity of weak solutions in the parabolic setting. As stated in the introduction, the development of the theory in the parabolic setting is not as advanced as in the degenerate elliptic setting, and at the moment one has only Lipischitz regularity in the range $2\le p \le 4$.
We state the main estimate again for the reader's convenience.

\begin{thrm}\label{main1-lip}  
Let $A_i$ satisfy the structure conditions \eqref{structure-zero} and 
let $u\in L^p((0,T), W_{\loc}^{1,p}(\Om))$ be a weak solution of \eqref{maineq-zero}  in $Q=\Om\times (0,T)$.  If $\ 2\le p \le 4$ then $|\nabla_0 u| \in L^\infty_{\loc}(Q)$ and $\partial_t u, Zu \in L^q_{\loc}(Q)$ for every $1\le q<\infty$. Moreover, in the range $2<p\le 4$ one has
that  for any $Q_{\mu, 2r}\subset Q$,
\begin{equation}\label{desired}
\sup_{Q_{\mu, r}}  |\nabla_0u|
\le C  \max\Big( \Big(\frac{1}{\mu r^{n+2}}\int\int_{Q_{\mu,2r}} (\delta+|\nabla_0u|^2)^{\frac{p}{2}}\Big)^{\frac{1}{2}}, \mu^{\frac{p}{2(2-p)}}\Big),
\end{equation}
where $C=C(n,p,\lambda, \Lambda, r,\mu)>0$ and $Q_{\mu,2r}$ is as in Theorem \ref{main-th-parabolic}. The special case $p=2$ has been studied before using linear techniques, and the estimates we obtain in that case are similar to \eqref{desired}, with the anisotropic cylinders being replaced with the classic parabolic cylinders. In the special case where there is no direct dependence on the space variable, i.e.  $A_i(x,\xi)=A_i(\xi)$, the parameters dependence  is more explicit, with
$C=C(n,p,\lambda, \Lambda) \mu^{\frac 1 2}>0$.
\end{thrm}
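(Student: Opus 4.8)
The plan is to carry the Riemannian approximation machinery of Section~\ref{riemannian-approx} into the parabolic regime and to reproduce, on DiBenedetto's anisotropic cylinders, the homogeneous Caccioppoli estimate of Theorem~\ref{thm:Cacci1}. First I would fix $\delta>0$ and, for each $\e>0$, consider the smooth solution $u^\e$ of the regularized parabolic problem \eqref{approx1}; the classical Euclidean and Riemannian parabolic theory applies to \eqref{approx1} and guarantees $u^\e\in C^\infty$, so every estimate below must be tracked to be uniform in both $\e$ and $\delta$ in order to finally let $\e\to 0$ and then $\delta\to 0$. Differentiating \eqref{approx1} along $X_l^\e$ produces, exactly as in \eqref{eqderivX} but with an additional $\partial_t w_l^\e$ term, a parabolic equation for $w_l^\e=X_l^\e u^\e$; differentiating along $Z$ produces a parabolic version of the linear equation \eqref{eqderivZ} for $Zu^\e$.

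The core of the argument is to remove $Zu^\e$ from the right-hand side of the parabolic Caccioppoli inequality, i.e.\ to establish the parabolic counterpart of Theorem~\ref{thm:Cacci1}. Here the hypothesis $2\le p\le 4$ is decisive: the purely spatial interpolation identity of Lemma~\ref{SobXU}, which rests only on the algebraic representation $Zu=X_lX_{n+l}u-X_{n+l}X_lu$ and integration by parts in the horizontal directions, survives verbatim slice-by-slice in time. Integrating it in $t$ against a space-time cutoff and combining it with the parabolic analog of Lemma~\ref{stimaZu} yields the gain-of-differentiability estimate of Lemma~\ref{mainlemma}, bounding $\int\int |Zu^\e|^{p+\beta}\eta^{p+\beta}$ by an integral of $(\delta+|\nabla_\e u^\e|^2)^{\frac{p+\beta}{2}}$. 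Feeding this back into the first-derivative Caccioppoli inequality produces a homogeneous inequality in which only $\nabla_\e u^\e$ and $\nabla_\e^2 u^\e$ appear, together with the parabolic energy term $\sup_t \int \eta^2 (\delta+|\nabla_\e u^\e|^2)^{\frac{p+\beta}{2}}$ at the top time slice.

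With the homogeneous Caccioppoli inequality in hand, I would run DiBenedetto's intrinsic-scaling De Giorgi--Moser iteration on the anisotropic cylinders $Q_{\mu,r}=B(x,r)\times[t_0-\mu r,t_0]$. The point of the anisotropy is to restore a scale invariance that the inhomogeneous equation \eqref{maineq-zero} lacks: choosing $\mu$ comparable to a power of the local gradient size balances the spatial and temporal scalings so that the iteration closes. The dichotomy built into this scaling---either the rescaled energy controls $\sup|\nabla_\e u^\e|$, or one is in the degenerate alternative where the bound is governed purely by the geometric factor $\mu$---is exactly what produces the $\max\big(\cdot,\mu^{\frac{p}{2(2-p)}}\big)$ on the right of \eqref{desired}. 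The iteration then delivers $\sup_{Q_{\mu,r}}|\nabla_\e u^\e|$ bounded by this maximum, uniformly in $\e$ and $\delta$.

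Finally I would pass to the limit: uniform convergence $u^\e\to u$ on compact subsets, together with the $\e$- and $\delta$-independence of all constants, transfers the sup bound to $\nabla_0 u$ and yields $|\nabla_0 u|\in L^\infty_\loc(Q)$; the uniform integrability estimate for $Zu^\e$ furnished by the parabolic version of Corollary~\ref{cor:Zu:inter} gives $Zu\in L^q_\loc(Q)$ for every finite $q$, and the equation \eqref{maineq-zero} then forces the same integrability for $\partial_t u$. I expect the main obstacle to be the second-order vertical term $Zu^\e$: the interpolation trick underlying Lemma~\ref{SobXU} is precisely what confines the method to $2\le p\le 4$, and controlling $Zu^\e$ while simultaneously managing the time-derivative contributions in the parabolic Caccioppoli inequalities---so that the anisotropic scaling needed for the iteration is not destroyed---is the delicate heart of the proof.
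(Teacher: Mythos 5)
Your proposal follows essentially the same route as the paper: Riemannian approximation in $\e$ and $\delta$, the interpolation identity of Lemma~\ref{SobXU} combined with the parabolic Caccioppoli inequality for $Zu^\e$ to eliminate the vertical derivative, a resulting Caccioppoli inequality for the horizontal gradient free of $Zu^\e$, and a Moser-type iteration on the anisotropic cylinders followed by passage to the limit. The one imprecision is your claim that the parabolic analogue of Lemma~\ref{mainlemma} bounds $\int\!\!\int |Zu^\e|^{p+\beta}\eta^{p+\beta}$ purely by the homogeneous energy: in the paper's Lemma~\ref{mainlemmap} and Proposition~\ref{cor1p} the time derivative of the cutoff contributes an additional inhomogeneous term with exponents $\frac{4-p}{2(p+\beta)}$ and $\frac{\beta+2}{p+\beta}$ respectively, and it is this term---rather than a dichotomy imposed a priori by intrinsic scaling---that survives the iteration and produces the second alternative $\mu^{\frac{p}{2(2-p)}}$ in \eqref{desired}; you do, however, correctly identify the time-derivative contributions as the delicate point.
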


In order to prove this regularity result in the parabolic setting, 
we tried to mimic the procedure in the elliptic setting. 
As in the previous setting the horizontal derivatives satisfy a Cacciopoli inequality involving the derivative in the direction of the commutator $Zu$

\begin{lemma}[Lemma 3.5 \cite{CCG}] \label{lemma3.4} Let $u$ be a weak solution of \eqref{maineq} in $Q$, with $\delta>0$. For every  $\beta \geq 0 $ and non-negative $\eta\in C^{1}([0,T], C^\infty_0 (\Om))$ vanishing on the parabolic boundary of $Q$,
we have
\begin{equation*}
\begin{aligned}
&\frac{1}{\beta+2}\sup_{t_1<t<t_2}\int_\Omega (\delta+ |\nabla_0u|^2)^{\frac{\beta+2}{2}}\eta^2 +\int_{t_1} ^{t_2} \int_\Omega
(\delta + |\nabla_0u|^2)^{(p-2+\beta)/2} |\nabla_0^2u|^2 \eta^2\\
\leq & \, C
\int_{t_1} ^{t_2} \int_\Omega
(\delta + |\nabla_0u|^2)^{\frac{p+\beta}{2}} ( |\nabla_0\eta|^2 + |Z\eta| \eta)
+ 
\frac{C}{\beta+2}
\int_{t_1} ^{t_2} \int_\Omega 
(\delta + |\nabla_0u|^2)^{\frac{\beta+2}{2}}|\partial_t \eta| \eta \\
& +
C(\beta + 1)^4
\int_{t_1} ^{t_2} \int_\Omega
(
\delta + |\nabla_0u|^2)^{\frac{p-2+\beta}{
2}} |Zu|^2 \eta^2,
\end{aligned}
\end{equation*}
where $C = C(n, p, \lambda, \Lambda) > 0$, independent of $\delta$.
\end{lemma}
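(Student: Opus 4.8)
The plan is to follow the strategy behind the elliptic Caccioppoli inequality of Lemma~\ref{lemma:Cacci2}, carrying along the extra parabolic term. Since $\delta>0$ the equation is non-degenerate, so the regularized solutions are smooth and all the differentiations below are justified with constants uniform in the approximation parameter; granting this we may treat $u$ as smooth. Differentiating the equation along $X_l$ and writing $w_l=X_lu$, the same computation that produces the stationary identity \eqref{eqderivX} now yields its parabolic analogue
\begin{equation*}
\partial_t w_l=\sum_{i,j=1}^{2n}X_i\big(A_{i,\xi_j}(\nabla_0u)\,X_jw_l\big)+\sum_{i,j=1}^{2n}X_i\big(A_{i,\xi_j}(\nabla_0u)\,[X_l,X_j]u\big)+\sum_{i=1}^{2n}[X_l,X_i]A_i(\nabla_0u),
\end{equation*}
where, because the only non-vanishing commutators are $[X_i,X_{i+n}]=Z$, the last two sums are first order in $Z$, involving $Zu$ and $ZA_{l'}(\nabla_0u)$ for the partner index $l'=l+s_ln$. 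First I would test this equation against $\varphi=\eta^2(\delta+|\nabla_0u|^2)^{\beta/2}w_l$, sum over $l=1,\dots,2n$, and integrate over $\Omega\times(t_1,\tau)$ for an arbitrary $\tau\in(t_1,t_2)$.

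For the time term, summing over $l$ and using the identity $(\delta+|\nabla_0u|^2)^{\beta/2}\sum_l w_l\partial_t w_l=\tfrac{1}{\beta+2}\partial_t(\delta+|\nabla_0u|^2)^{(\beta+2)/2}$, an integration by parts in $t$ (legitimate since $\eta$ vanishes on the parabolic boundary) leaves only the contribution at the terminal time $\tau$; taking the supremum over $\tau$ gives the storage term $\tfrac{1}{\beta+2}\sup_t\int_\Omega(\delta+|\nabla_0u|^2)^{(\beta+2)/2}\eta^2$, together with the term carrying $|\partial_t\eta|\eta$ that comes from $\partial_t(\eta^2)$. The principal divergence term, after integration by parts in space, yields the coercive good term $\int\!\!\int\eta^2(\delta+|\nabla_0u|^2)^{(p-2+\beta)/2}|\nabla_0^2u|^2$ via the lower bound in \eqref{structure-zero}, plus a genuinely non-negative contribution from differentiating the weight (ellipticity makes $\sum_{i,j}A_{i,\xi_j}X_i(|\nabla_0u|^2)X_j(|\nabla_0u|^2)\ge0$), plus cutoff cross terms of type $\eta\nabla_0\eta$ that Young's inequality absorbs into the good term at the price of the $|\nabla_0\eta|^2(\delta+|\nabla_0u|^2)^{(p+\beta)/2}$ term on the right.

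The delicate point, and the main obstacle, is the second group of commutator terms. The divergence-type piece $\sum_{i,j}X_i(A_{i,\xi_j}[X_l,X_j]u)$ is the easier one: after integration by parts it pairs $A_{i,\xi_j}Zu\sim(\delta+|\nabla_0u|^2)^{(p-2)/2}Zu$ against $\nabla_0w_l$, so Young's inequality turns it into a fraction of the good term plus the desired $(\beta+1)^4\int\!\!\int\eta^2(\delta+|\nabla_0u|^2)^{(p-2+\beta)/2}|Zu|^2$. The truly troublesome term is $\sum_i[X_l,X_i]A_i=\pm ZA_{l'}(\nabla_0u)$: testing it and integrating by parts in $Z$ produces $Z\varphi$, whose leading pieces are $Zw_l$ and $Z(\delta+|\nabla_0u|^2)^{\beta/2}$, i.e.\ the derivative $\nabla_0Zu$ that is absent from the right-hand side. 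The key is that $Z$ commutes with every horizontal field, so $Zw_l=X_lZu$; one then integrates by parts once more, this time in $X_l$, moving the derivative onto $A_{l'}(\nabla_0u)$ and onto the weight. This trades $\nabla_0Zu$ for $\nabla_0^2u$ (already controlled by the good term) multiplied by $Zu$, and whenever the derivative falls on the cutoff it produces the factor $Z\eta$, which is exactly the origin of the $|Z\eta|\eta$ term on the right. A final round of Young's inequalities absorbs the second-order factors into the good term; since each weight derivative carries a power of $\beta$ and the argument is iterated, the accumulated constant is the polynomial $(\beta+1)^4$. Collecting all contributions, choosing the Young parameters small relative to $n,p,\lambda,\Lambda$, and taking the supremum over $\tau$ yields the stated inequality with $C=C(n,p,\lambda,\Lambda)$ independent of $\delta$.
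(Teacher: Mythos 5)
Your proposal is correct and follows essentially the same route as the proof the paper points to (Lemma 3.5 of \cite{CCG}, i.e.\ the parabolic version of the argument sketched for Lemma \ref{lemma:Cacci2}): differentiate the equation along $X_l$, test with $\varphi=\eta^2(\delta+|\nabla_0u|^2)^{\beta/2}X_lu$ summed over $l$, and neutralize the dangerous commutator term $ZA_{l'}(\nabla_0u)$ by a second integration by parts in the horizontal direction, trading $\nabla_0Zu$ for $\nabla_0^2u$ times $Zu$. The individual steps all check out, including the sign of the differentiated-weight term, the origin of the $|Z\eta|\eta$ and $|\partial_t\eta|\eta$ contributions, and the crude $(\beta+1)^4$ bookkeeping.
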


As before the main scope is to handle the $Zu$ term, in order to obtain a Cacciopoli type inequality for the horizontal derivatives which is independent of $Zu$. More precisely, we aim  
to prove the following Caccioppoli type inequality which is a parabolic analogue of Theorem \ref{thm:Cacci1}. 

\begin{thrm}\label{CCGMt2.8} 
Let $u^\e$ be a  solution of \eqref{approx1} in $\Om\times (0,T)$ and 
$B_{\e}(x_0,r) \times (t_0-r^2, t_0)$ a parabolic cylinder. 
Let  $\eta\in C^{\infty}( B_{\e}(x_0,r) \times (t_0-r^2, t_0])$
be a non-negative test function $\eta\leq 1,$ which vanishes on the parabolic boundary
such that  there exists a constant $C_{\lambda, \Lambda}>1$ for which
$||\partial_t \eta||_{L^\infty}\leq C_{\lambda, \Lambda} (1 + ||\nabla_\e\eta||^2_{L^\infty}).$
 Set $t_1 = t_0-r^2, t_2=t_0$. There exists a constant $C>0$ depending on $\ddelta$ 
$ p,$ and $\Lambda$ 
such that 
for all $\beta\ge 2$  one has
{\allowdisplaybreaks
\begin{align*}
& \int_{t_1} ^{t_2} \int_\Omega
\eta^2(\ddelta + |\nabla_\e u^\e|^2)^{(p-2+\beta)/2} \sum_{i,j=1}^{2n+1} |X^\e_i X^\e_j u^\e|^2 
 + \frac{1}{\beta+2} \max_{t \in (t_0-r^2, t_0]}\int_\Omega  (\ddelta+ |\nabla_\e u^\e|^2)^{\frac{\beta}{2}+1}\eta^2
\\
& \leq C (\beta + 1)^5 (||\nabla_\e\eta||^2_{
L^{\infty}} + ||\eta Z\eta||_{L^{\infty}} + 1)
\int_{t_1} ^{t_2} \int_{spt(\eta)}
(\ddelta + |\nabla_\e u^\e|^2)^{(p+\beta)/2}.
\end{align*}}
\end{thrm}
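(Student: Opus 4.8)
The plan is to follow the same route used to establish the elliptic inequality \eqref{cacci1} of Theorem~\ref{thm:Cacci1} in the general case, now carried out in the space-time setting. The starting point is the parabolic Caccioppoli inequality of Lemma~\ref{lemma3.4} (applied to the approximating solutions $u^\e$ of \eqref{approx1}), whose right-hand side still contains the troublesome term $\int_{t_1}^{t_2}\int_\Omega (\ddelta+|\nabla_\e u^\e|^2)^{(p-2+\beta)/2}|Zu^\e|^2\eta^2$. The whole point is to re-express this term so that it can be absorbed into the left-hand side of the desired inequality, leaving only the homogeneous quantity $\int_{t_1}^{t_2}\int_{spt(\eta)}(\ddelta+|\nabla_\e u^\e|^2)^{(p+\beta)/2}$ together with the maximum-in-time term, which Lemma~\ref{lemma3.4} already provides with coefficient $1/(\beta+2)$.

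First I would differentiate \eqref{approx1} to produce the parabolic analogues of \eqref{eqderivX} and \eqref{eqderivZ}: since $Z$ is central and commutes both with $\partial_t$ and with every $X_i^\e$, the function $Zu^\e$ solves the homogeneous linear parabolic equation $\partial_t(Zu^\e)=\sum_{i,j}X_i^\e(A^\e_{\delta i,\xi_j}(\nabla_\e u^\e)X_j^\e Zu^\e)$, while $w^\e_l=X^\e_l u^\e$ solves the inhomogeneous one carrying the commutator source $s_l Z(A^\e_{l+s_l n,\delta}(\nabla_\e u^\e))$. Next I would establish the parabolic counterpart of the reverse H\"older inequality of Lemma~\ref{lemma:rev}, testing the equation for $w^\e_l$ against $\varphi=\eta^{\beta+2}|Zu^\e|^\beta X^\e_l u^\e$, summing over $l$, and using crucially that $Zu^\e$ itself solves the clean linear equation above. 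H\"older's inequality then upgrades this into the space-time version of \eqref{cacci4}, bounding $\int_{t_1}^{t_2}\int_\Omega\eta^{\beta+2}(\ddelta+|\nabla_\e u^\e|^2)^{(p-2)/2}|Zu^\e|^\beta|\nabla_\e^2 u^\e|^2$ by $c^\beta\beta^\beta||\nabla_\e\eta||_{L^\infty}^\beta$ times $\int_{t_1}^{t_2}\int_\Omega\eta^\beta(\ddelta+|\nabla_\e u^\e|^2)^{(p-2+\beta)/2}|\nabla_\e^2 u^\e|^2$. Finally, exactly as in the passage following \eqref{cacci4}, I would split the $Zu^\e$ term by H\"older in the conjugate exponents $(\beta+2)/2$ and $(\beta+2)/\beta$, use $|Zu^\e|^2\le C|\nabla_\e^2 u^\e|^2$ together with the parabolic \eqref{cacci4}, and close by Young's inequality, absorbing a small multiple of the second-derivative integral on the left.

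The main obstacle is purely parabolic and concerns the time derivative. When $\varphi=\eta^{\beta+2}|Zu^\e|^\beta X^\e_l u^\e$ is inserted into the $\partial_t$ term of the equation for $w^\e_l$, integration by parts in $t$ moves $\partial_t$ onto the weight $\eta^{\beta+2}|Zu^\e|^\beta$. The contribution in which $\partial_t$ lands on $\eta$ is exactly what forces the structural hypothesis $||\partial_t\eta||_{L^\infty}\le C_{\lambda,\Lambda}(1+||\nabla_\e\eta||^2_{L^\infty})$, so that these terms are reabsorbed into the $(||\nabla_\e\eta||^2_{L^\infty}+||\eta Z\eta||_{L^\infty}+1)$ factor and produce the extra powers of $(\beta+1)$. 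The genuinely delicate contribution is the one where $\partial_t$ lands on $|Zu^\e|^\beta$: here I would substitute $\partial_t Zu^\e$ from the homogeneous parabolic equation and integrate by parts in space, producing $\nabla_\e Zu^\e$ terms that are then controlled by the parabolic analogue of the Caccioppoli estimate for $Zu^\e$ (Lemma~\ref{stimaZu}). The same time integration by parts also generates a boundary term at the top slice $t=t_0$; since $\eta$ need not vanish there, it cannot be discarded, and it is precisely this term that feeds into, and is controlled by, the maximum-in-time quantity $\max_{t\in(t_0-r^2,t_0]}\int_\Omega(\ddelta+|\nabla_\e u^\e|^2)^{\beta/2+1}\eta^2$ appearing on the left of the statement, which must therefore be carried along throughout the argument.
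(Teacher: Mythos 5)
Your overall architecture coincides with the paper's: Theorem \ref{CCGMt2.8} is obtained from the parabolic Caccioppoli inequality of Lemma \ref{lemma3.4} by eliminating the $Zu^\e$ term through a parabolic reverse H\"older inequality (Lemma \ref{cacciopoXZ}), proved with exactly the test function $\varphi=\eta^{\beta+2}|Zu^\e|^\beta X^\e_l u^\e$ you propose, followed by H\"older's inequality, the pointwise bound $|Zu^\e|\le 2|\nabla_\e^2u^\e|$, the parabolic analogue of \eqref{cacci4}, and absorption. You also correctly identify the time derivative as the new obstruction and as the origin of the hypothesis on $\|\partial_t\eta\|_{L^\infty}$.

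The one step that does not close as you describe it is the treatment of the remainder terms produced by $\partial_t$. In Lemma \ref{cacciopoXZ} these survive as
$\int_{t_1}^{t_2}\int_\Omega |Zu^\e|^{\beta+2}\eta^{\beta+3}|\partial_t\eta|$ and $\int_{t_1}^{t_2}\int_\Omega |Zu^\e|^{\beta}|\nabla_\e u^\e|^2\,\partial_t(\eta^{\beta+2})$, and they cannot simply be ``reabsorbed into the $(\|\nabla_\e\eta\|^2_{L^\infty}+\|\eta Z\eta\|_{L^\infty}+1)$ factor'': unlike every other term in the scheme they carry no ellipticity weight $(\ddelta+|\nabla_\e u^\e|^2)^{(p-2)/2}$, so they are not comparable to the homogeneous quantity $\int\int(\ddelta+|\nabla_\e u^\e|^2)^{(p+\beta)/2}$ on the right of the theorem, nor to anything absorbable on the left. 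The paper's resolution is the ``simple trick'' of Section 7.1: write $|Zu^\e|^2\le 4\sum_{i,j}|X_i^\e X_j^\e u^\e|^2$ and then insert the missing weight using $(\ddelta+|\nabla_\e u^\e|^2)^{(p-2)/2}\ge\ddelta^{(p-2)/2}$, at the price of a constant $C_\ddelta$. This is exactly why the constant in Theorem \ref{CCGMt2.8} is allowed to depend on $\ddelta$ --- a dependence your sketch never accounts for, which is the symptom of the missing step; your alternative of substituting $\partial_t Zu^\e$ from its equation and appealing to Lemma \ref{stimaZu} still leaves you with unweighted powers of $Zu^\e$ and does not by itself restore homogeneity. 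Once the weight is inserted, your remaining steps (H\"older with exponents $(\beta+2)/2$ and $(\beta+2)/\beta$, the parabolic \eqref{cacci4}, Young's inequality) go through as in the stationary case. A minor correction as well: the top-slice boundary term generated by your test function is $\int_\Omega\eta^{\beta+2}|Zu^\e|^\beta|\nabla_\e u^\e|^2$ at $t=t_2$, which has a favorable sign and sits on the left-hand side of Lemma \ref{cacciopoXZ}; the maximum-in-time quantity in the statement is instead inherited from the basic Caccioppoli inequality of Lemma \ref{lemma3.4}.
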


\subsection{The regularized $p-$Laplacian case} 

We first consider a simplified setting, where $\delta$ is fixed. 
The analogous of Lemma  \ref{lemma:rev} is the following

\begin{lemma} \label{cacciopoXZ}Set $T >t_2 > t_1> 0$. Let $u^\e$ be a weak solution of \eqref{approx1} in $Q=\Om\times (0,T)$. Let 
$\beta\ge 2$ and let $\eta\in C^1((0,T), C^{\infty}_0(\Om))$,  with $0\le \eta \leq 1$. For all $\alpha \leq 1$ there exist  constants
$C_\Lambda$, $C_\alpha = C(\alpha, \lambda, \Lambda)>0$ such that  
{\allowdisplaybreaks
\begin{align}\label{longwaydown}
& \int_{t_1}^{t_2} \int_\Om \eta^{\beta+2}(\ddelta+|\nabla_\e u^\e|^2)^{\frac{p-2}{2}} |Zu^\e|^\beta \sum_{i,j=1}^{2n+1} |X^\e_iX^\e_j u^\e|^2 
+ \int_\Om
 \eta^{\beta+2}|Zu^\e|^\beta |\nabla_\e u^\e|^2\bigg|_{t_1}^{t_2}
\notag\\
& \leq C_\al (\beta+1)^2(1+ |\nabla_\e \eta||_{L^\infty}^2)\int_{t_1}^{t_2} \int_\Om (\eta^{\beta} +  \eta^{\beta+4}) (\ddelta+|\nabla_\e u^\e|^2)^{\frac{p}{2}} |Zu^\e|^{\beta-2} \sum_{i,j=1}^{2n+1} |X^\e_iX^\e_j u^\e|^2
\notag\\
&+ C_{\Lambda}(\beta+1)^2
\int_{t_1}^{t_2}  \int_\Om
(\ddelta+|\nabla_\e u^\e|^2)^{\frac{p+2}{2}} |Zu^\e|^{\beta-2}    \eta^{\beta+2} 
\notag\\
& + \frac{2\al}{(1 + ||\nabla_\e \eta||^2)(\beta + 2)}\int_{t_1}^{t_2}\int_\Omega |Zu^\e|^{\beta+2} 
\eta^{\beta+3} |\partial_t \eta|   +
\int_{t_1}^{t_2} \int_{\Om} |Zu^\e|^{\beta}  |\nabla_\e u^\e|^2  \p_t ( \eta^{\beta+2}).
\notag
\end{align}
}
\end{lemma}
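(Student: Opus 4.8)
The plan is to adapt the proof of the elliptic reverse inequality of Lemma \ref{lemma:rev} to the parabolic setting, the only genuinely new feature being the presence of the time derivative. As in the stationary case, I would differentiate \eqref{approx1} in the horizontal directions, so that $w_l^\e := X_l^\e u^\e$ solves the parabolic analogue of \eqref{eqderivX}, namely $\partial_t w_l^\e = \sum_{i,j}X_i^\e(A_{i,\delta,\xi_j}^\e(\nabla_\e u^\e)X_l^\e X_j^\e u^\e) + s_l Z(A^\e_{l+s_l n,\delta}(\nabla_\e u^\e))$, and then test this equation against the non-standard test function $\varphi_l = \eta^{\beta+2}|Zu^\e|^\beta X_l^\e u^\e$, summing over $l = 1,\dots,2n$ and integrating over $\Om\times(t_1,t_2)$. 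The spatial part of the resulting identity is treated exactly as in Lemma \ref{lemma:rev}: integrating by parts in space and using the lower ellipticity bound in \eqref{structure-epsilon} produces the leading term $\int_{t_1}^{t_2}\int_\Om\eta^{\beta+2}(\delta+|\nabla_\e u^\e|^2)^{\frac{p-2}{2}}|Zu^\e|^\beta\sum_{i,j}|X_i^\e X_j^\e u^\e|^2$ on the left, while the remaining terms --- those in which the derivatives fall on $\eta$, or on the weight $|Zu^\e|^\beta$, the latter being controlled through the fact that $Zu^\e$ itself solves the parabolic version of \eqref{eqderivZ} --- reproduce the first two terms on the right, carrying the factors $(1+\|\nabla_\e\eta\|_{L^\infty}^2)$, $(\beta+1)^2$ and the reduced power $|Zu^\e|^{\beta-2}$.

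The genuinely new step is the time term $\sum_l \int_{t_1}^{t_2}\int_\Om \partial_t w_l^\e \, \varphi_l$. Since $\varphi_l$ contains the factor $w_l^\e = X_l^\e u^\e$, I would write $\sum_l \partial_t w_l^\e \, w_l^\e = \tfrac12\partial_t\big(\sum_l (X_l^\e u^\e)^2\big)$, which up to the vertical contribution --- recovered by also testing the equation for $X_{2n+1}^\e u^\e = \e Zu^\e$, a multiple of \eqref{eqderivZ} --- equals $\tfrac12\partial_t|\nabla_\e u^\e|^2$. Integrating by parts in $t$ then yields the boundary-in-time quantity $\int_\Om\eta^{\beta+2}|Zu^\e|^\beta|\nabla_\e u^\e|^2\big|_{t_1}^{t_2}$, which I move to the left-hand side, together with two volume terms coming from $\partial_t(\eta^{\beta+2}|Zu^\e|^\beta)$. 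The term carrying $\partial_t(\eta^{\beta+2})$ is already the last term on the right of the statement, whereas the term carrying $\partial_t(|Zu^\e|^\beta) = \beta|Zu^\e|^{\beta-2}Zu^\e\,\partial_t Zu^\e$ is where the difficulty concentrates.

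To handle this last term I would substitute $\partial_t Zu^\e$ from the parabolic equation \eqref{eqderivZ} satisfied by $Zu^\e$ and integrate by parts once more in space. The crucial structural fact, already used in the stationary Lemma \ref{lemma:rev}, is that $Z$ is central, so that $[Z,X_i^\e]=0$ and the derivatives of $Zu^\e$ that appear can be commuted back into the full second-order tensor $\nabla_\e^2 u^\e$; this keeps all contributions at the level of $\sum_{i,j}|X_i^\e X_j^\e u^\e|^2$, $(\delta+|\nabla_\e u^\e|^2)$ and powers of $|Zu^\e|$, with no genuine third-order derivative surviving. The resulting pieces are then sorted: the Hessian-type ones are matched against the first term on the right, the lower-order ones against the second, and the piece still carrying $\partial_t\eta$ is estimated by Young's inequality with the free parameter $\alpha$ to produce the third term $\frac{2\alpha}{(1+\|\nabla_\e\eta\|^2)(\beta+2)}\int_{t_1}^{t_2}\int_\Om|Zu^\e|^{\beta+2}\eta^{\beta+3}|\partial_t\eta|$. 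The main obstacle is precisely this coupling: unlike in the elliptic case, the time derivative of the weight $|Zu^\e|^\beta$ forces one to re-enter the evolution equation for $Zu^\e$, and one must keep the combinatorial factors $(\beta+1)$ and the $\e$-uniformity under control while balancing the several quadratic terms through $\alpha$ and the factor $(1+\|\nabla_\e\eta\|^2)$, so that the genuinely second-order quantities can be reabsorbed on the left and the remainder collapses onto the four groups on the right.
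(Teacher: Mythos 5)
Your proposal follows essentially the same route the paper indicates: you test the $X_l^\e$-differentiated equation with the non-standard test function $\eta^{\beta+2}|Zu^\e|^\beta X_l^\e u^\e$, exploit the fact that $Zu^\e$ solves the (parabolic analogue of) equation \eqref{eqderivZ} to control the terms where derivatives fall on the weight, and attribute the boundary-in-time term and the last two right-hand terms to the parabolic part after integrating by parts in $t$ --- exactly the mechanism the survey describes for Lemma \ref{lemma:rev} and its parabolic extension. The approach matches; the only caveat is that the absorption constants in the $\partial_t(|Zu^\e|^\beta)$ step, which you yourself flag as the main obstacle, are where the detailed argument in \cite{CCZ} does the real work.
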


The last two terms come from the parabolic term, and they 
introduce a lack of homogeneity of the equation. Since here $\delta$ is fixed, we 
introduced a  simple trick to make these last two terms homogeneous. 
Recalling that $Z$ is obtained as a commutator of the horizontal vector fields and that $\eta\le 1$, we estimate
\begin{equation}\label{need delta>0 }
\int_{t_1}^{t_2}\int_\Omega |Zu^\e|^{2} \eta^3    \le  C_\delta\int_{t_1}^{t_2}\int_\Om
(\ddelta+|\nabla_\e u^\e|^2)^{\frac{p-2 }2} \sum_{i,j=1}^{2n+1} |X^\e_i X_j^\e u^\e|^2 \eta^2 .
\end{equation}

With this simple remark, we are now able to follow the same steps as in the subelliptic setting, and obtain Theorem \ref{main1-lip}. From here, since the equation is uniformly sub-parabolic, we finally establish with standard instruments the $C^{\infty}$ regularity of the solution for $\delta$ fixed (see \cite{CCG} for more details).

\subsection{The $2\le p \le 4$ case}

The $L^\infty$ estimate of gradient in case $2\leq p \leq 4$ the parabolic setting is similar to the stationary one. 
Again the main challenge is to deal the lack of homogeneity of the equation.

As in the stationary case, we start with an estimate of the derivative of the solution in the direction $Z$. 
Using this relation, a parabolic version of the Caccioppoli type estimate stated in  Proposition \ref{stimaZu} is established, which together with the 
 inequality \ref{SobXU} leads to an estimate of 
$L^{p+\beta}-$norm of $Zu^\e$

%

\begin{lemma}\label{mainlemmap}
Let $u^\e$ be a solution of \eqref{maineq} in $Q$, with $\delta>0$ and $2\leq p \leq 4$. 
Then for every $ \beta \geq 0$ and non-negative $\eta\in C^{1}([0,T], C^\infty_0 (\Om))$ vanishing on the parabolic boundary of $Q$, we have
\begin{equation}\label{mainestimate}
\begin{aligned}
\Big(\int_{t_1} ^{t_2}\int_\Omega &
|Zu^\e|^{p+\beta} \eta^{p+\beta} \Big)^{\frac{1}{p+\beta}}
\le  C(p+\beta) ||\nabla_\e\eta||_{L^\infty}\Big(\int\int_{spt(\eta)} (\delta+ |\nabla_\e u^\e|^2)^{\frac{p+\beta}{2}}\Big)^{\frac{1}{p+\beta}}\\
&  + 
C(p+\beta)||\eta\partial_t\eta||_{L^\infty}^{\frac{1}{2}}|spt(\eta)|^{\frac{p-2}{2(p+\beta)}}\Big(\int\int_{spt(\eta)}  (\delta+ |\nabla_\e u^\e|^2)^{\frac{p+\beta}{2}} \Big)^{\frac{4-p}{2(p+\beta)}}
\end{aligned}
\end{equation}
\end{lemma}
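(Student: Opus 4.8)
The plan is to establish the $L^{p+\beta}$ estimate for $Zu^\e$ in Lemma \ref{mainlemmap} by carefully combining three ingredients developed earlier: the interpolation inequality of Lemma \ref{SobXU}, the parabolic Caccioppoli estimate for $Zu^\e$ (the parabolic analogue of Lemma \ref{stimaZu}), and the structural fact that $|Zu^\e|\le 2|\nabla_\e u^\e|$. The overall strategy mirrors the stationary proof of Lemma \ref{mainlemma}, but one must now carry along the parabolic terms that destroy the clean homogeneity one enjoys in the elliptic case. Throughout I will work at the level of the regularized solutions $u^\e$ of \eqref{approx1} with $\delta>0$, so that all quantities are smooth and the formal integrations by parts are legitimate, reserving the passage $\e,\delta\to 0$ for the end.

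First I would record the parabolic Caccioppoli inequality for $Zu^\e$: since $Zu^\e$ solves the linear equation \eqref{eqderivZ} (with its parabolic counterpart), testing with $\eta^{4+\beta}|Zu^\e|^\beta Zu^\e$ and integrating in space-time yields, after the usual absorption, a bound of the form
\begin{equation*}
\int_{t_1}^{t_2}\!\int_\Omega (\delta+|\nabla_\e u^\e|^2)^{\frac{p-2}{2}}|Zu^\e|^\beta|\nabla_\e Zu^\e|^2\eta^{4+\beta}
\le C\int_{t_1}^{t_2}\!\int_\Omega (\delta+|\nabla_\e u^\e|^2)^{\frac{p-2}{2}}|Zu^\e|^{\beta+2}|\nabla_\e\eta|^2\eta^{2+\beta}
+ R_t,
\end{equation*}
where $R_t$ collects the genuinely parabolic remainder, controlled by $\|\eta\partial_t\eta\|_{L^\infty}$ times an integral of $(\delta+|\nabla_\e u^\e|^2)^{(\beta+2)/2}|Zu^\e|^\beta\eta^{\beta+2}$. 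Next I feed the spatial interpolation inequality \eqref{interpolation} of Lemma \ref{SobXU}, applied slicewise in $t$ and then integrated over $(t_1,t_2)$, into this Caccioppoli bound. The key gain in Lemma \ref{SobXU} is that it trades the full $L^{p+\beta}$ norm of $Zu^\e$ against the weighted gradient term $(\delta+|\nabla_0u|^2)^{(p-2)/2}|Zu|^\beta|\nabla_0 Zu|^2\eta^{4+\beta}$ that the Caccioppoli inequality controls, plus a lower-order term in $|\nabla_\e u^\e|$ alone.

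The main obstacle, and the crux of the whole argument, is that the parabolic remainder $R_t$ is not homogeneous of the same degree as the elliptic terms: it carries the time-derivative weight $\|\eta\partial_t\eta\|_{L^\infty}$ and an extra factor of $|Zu^\e|^\beta$ rather than $|Zu^\e|^{\beta+2}$. To absorb it I would use $|Zu^\e|\le 2|\nabla_\e u^\e|$ to reduce the power of $Zu^\e$, then apply Young's inequality to split off a small multiple of the left-hand side $L^{p+\beta}$ norm (absorbable) at the cost of a term estimated purely by $(\delta+|\nabla_\e u^\e|^2)^{(p+\beta)/2}$, weighted by $\|\eta\partial_t\eta\|_{L^\infty}$. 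The exponents $\tfrac{p-2}{2(p+\beta)}$ and $\tfrac{4-p}{2(p+\beta)}$ appearing in the second term of \eqref{mainestimate} are precisely the bookkeeping output of this Young/Hölder splitting, and tracking them correctly — verifying that they sum appropriately and that $2\le p\le 4$ keeps both exponents nonnegative — is where the restriction on $p$ is genuinely used. Once the estimate is assembled with these two distinct scaling behaviours made explicit, taking $\delta\to 0$ and $\e\to 0$ through the uniform bounds yields the stated inequality for the original solution.
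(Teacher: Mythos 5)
Your proposal follows essentially the same route as the paper: a parabolic version of the Caccioppoli inequality of Lemma \ref{stimaZu} for $Zu^\e$, combined with the interpolation inequality of Lemma \ref{SobXU} (which is where $2\le p\le 4$ enters), with the time-derivative remainder reduced via $|Zu^\e|\le 2|\nabla_\e u^\e|$ and a H\"older--Young splitting that produces the inhomogeneous second term of \eqref{mainestimate}. This matches the paper's (sketched) argument, so no further comparison is needed.
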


Having established an estimate for the derivative with respect to $Z$, we can proceed to the estimate of the intrinsic gradient $(X_1u, \ldots, X_mu)$. Differentiating the equation with respect to $X^\e_l$ we obtain the differential equation satisfied by $X^\e_lu$, and from this, a 
Cacciopoli type inequality, analogous to the Theorem \ref{thm:Cacci1} established in the stationary setting. More in detail,
%
%

\begin{prop}\label{cor1p}
Let $u^\e$ be a weak solution of \eqref{maineq} in $Q$, with $\delta>0$ and $2\leq p \leq 4$. 
Then for every $ \beta \geq 0$ and non-negative $\eta\in C^{1}([0,T], C^\infty_0 (\Om))$ vanishing on the parabolic boundary of $Q$, we have
\begin{equation}\label{maincacci}
\begin{aligned}
&\sup_{t_1<t<t_2}\int_\Omega (\delta+ |\nabla_\e u^\e|^2)^{\frac{\beta+2}{2}}\eta^2 +\int_{t_1} ^{t_2} \int_\Omega
(\delta + |\nabla_\e u^\e|^2)^{(p-2+\beta)/2} |\nabla_\e^2u^\e|^2 \eta^2\\
\leq & C(p+\beta)^7\big( ||\nabla_\e\eta||_{L^\infty}^2+||\eta Z\eta||_{L^\infty}\big)\int\int_{spt(\eta)} (\delta+ |\nabla_\e u^\e|^2)^{\frac{p+\beta}{2}}\\
& + 
C(p+\beta)^7||\eta\partial_t\eta||_{L^\infty}|spt(\eta)|^{\frac{p-2}{p+\beta}}\Big(\int\int_{spt(\eta)}  (\delta+ |\nabla_\e u^\e|^2)^{\frac{p+\beta}{2}} \Big)^{\frac{\beta+2}{p+\beta}},
\end{aligned}
\end{equation}
where $C=C(n,p,\lambda,\Lambda)>0$.
\end{prop}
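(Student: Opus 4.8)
The plan is to start from the Caccioppoli inequality of Lemma \ref{lemma3.4} and to remove its last term
$$C(\beta+1)^4\int_{t_1}^{t_2}\int_\Omega(\delta+|\nabla_\e u^\e|^2)^{\frac{p-2+\beta}{2}}|Zu^\e|^2\eta^2,$$
trading the factor $|Zu^\e|^2$ for the pure energy by invoking the $L^{p+\beta}$ bound on $Zu^\e$ of Lemma \ref{mainlemmap}. Throughout write $G:=\int\int_{spt(\eta)}(\delta+|\nabla_\e u^\e|^2)^{\frac{p+\beta}{2}}$.

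First I would estimate the $Zu^\e$ term. Since $p-2+\beta=(p+\beta)-2$, Hölder's inequality with conjugate exponents $\frac{p+\beta}{2}$ and $\frac{p+\beta}{p+\beta-2}$, loading all the cutoff onto the first factor, gives
$$\int_{t_1}^{t_2}\int_\Omega(\delta+|\nabla_\e u^\e|^2)^{\frac{p-2+\beta}{2}}|Zu^\e|^2\eta^2\le\Big(\int_{t_1}^{t_2}\int_\Omega|Zu^\e|^{p+\beta}\eta^{p+\beta}\Big)^{\frac{2}{p+\beta}}G^{\frac{p+\beta-2}{p+\beta}}.$$
Inserting Lemma \ref{mainlemmap} (which controls the $\frac{1}{p+\beta}$ power of the $Zu^\e$ integral) and squaring by means of $(a+b)^2\le2a^2+2b^2$, the factor $\big(\int\int|Zu^\e|^{p+\beta}\eta^{p+\beta}\big)^{\frac{2}{p+\beta}}$ is dominated by a constant times
$$(p+\beta)^2||\nabla_\e\eta||_{L^\infty}^2\,G^{\frac{2}{p+\beta}}+(p+\beta)^2||\eta\partial_t\eta||_{L^\infty}\,|spt(\eta)|^{\frac{p-2}{p+\beta}}\,G^{\frac{4-p}{p+\beta}}.$$
Multiplying back by $G^{\frac{p+\beta-2}{p+\beta}}$, the powers of $G$ recombine exactly: the first summand produces $G^{1}$ and the second $G^{\frac{\beta+2}{p+\beta}}$, i.e. the two homogeneities that appear on the right-hand side of \eqref{maincacci}. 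Carrying the prefactor $(\beta+1)^4$ and using $\beta+1\le p+\beta$ keeps the constant below $(p+\beta)^6\le(p+\beta)^7$.

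It remains to handle the genuinely parabolic term $\frac{C}{\beta+2}\int_{t_1}^{t_2}\int_\Omega(\delta+|\nabla_\e u^\e|^2)^{\frac{\beta+2}{2}}|\partial_t\eta|\eta$ of Lemma \ref{lemma3.4}, whose density carries the ``wrong'' exponent $\frac{\beta+2}{2}$. Here I would use $2\le p\le4$ precisely to guarantee that $\frac{p+\beta}{p-2}$ is a positive exponent, and apply Hölder with the pair $\frac{p+\beta}{\beta+2},\frac{p+\beta}{p-2}$ together with $|\partial_t\eta|\eta\le||\eta\partial_t\eta||_{L^\infty}$ on $spt(\eta)$; this again lands on $||\eta\partial_t\eta||_{L^\infty}|spt(\eta)|^{\frac{p-2}{p+\beta}}G^{\frac{\beta+2}{p+\beta}}$ (for $p=2$ the exponents degenerate, but both members reduce to $G$, so the bound persists). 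The surviving term $C\int\int(\delta+|\nabla_\e u^\e|^2)^{\frac{p+\beta}{2}}(|\nabla_\e\eta|^2+|Z\eta|\eta)$ of Lemma \ref{lemma3.4} is already of the required first homogeneity. Summing the three contributions gives \eqref{maincacci}.

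The main obstacle is the homogeneity bookkeeping rather than any isolated hard estimate: the substantive analytic work is already contained in Lemma \ref{mainlemmap}, which is where the hypothesis $2\le p\le4$ really enters, through the interpolation Lemma \ref{SobXU}. The delicate point, absent in the elliptic setting, is that the parabolic terms force two distinct powers of the energy, the linear $G$ and the sublinear $G^{(\beta+2)/(p+\beta)}$, and one must tune the Hölder exponents so that both the $Zu^\e$ contribution and the time-derivative contribution fall onto exactly these same two scalings. Finally, in contrast to the general-$p$ argument through \eqref{cacci4}, no absorption into the left-hand side is needed here, since Lemma \ref{mainlemmap} already bounds $Zu^\e$ by the first-order energy with no second-order term remaining on the right.
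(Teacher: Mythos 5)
Your proposal is correct and follows essentially the same route the paper takes: combine the parabolic Caccioppoli inequality of Lemma \ref{lemma3.4} with the $L^{p+\beta}$ bound on $Zu^\e$ from Lemma \ref{mainlemmap}, using H\"older's inequality with exponents $\tfrac{p+\beta}{2},\tfrac{p+\beta}{p+\beta-2}$ on the $Zu^\e$ term and $\tfrac{p+\beta}{\beta+2},\tfrac{p+\beta}{p-2}$ on the $\partial_t\eta$ term so that the two homogeneities $G$ and $G^{(\beta+2)/(p+\beta)}$ come out exactly as in \eqref{maincacci}. The only detail you gloss over is clearing the $\tfrac{1}{\beta+2}$ prefactor on the sup term of Lemma \ref{lemma3.4}, which costs one more factor of $(p+\beta)$ and is absorbed by the stated $(p+\beta)^7$.
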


From the latter, the Lipschitz bound in Theorem \ref{main1-lip} follows through a modification of Moser's iteration method.

\section{Open problems}

The study on the subelliptic equation (\ref{maineq}) and its parabolic counterpart (\ref{maineq-zero}) is still in the early stage. In the following, we list some open problems, concerning the regularity and property of solutions to these equations.

(i) {\it Regularity for equation (\ref{maineq}) in a Carnot group}.
The $C^{1,\alpha}$-regularity for equation (\ref{maineq}) can be easily extended to any step two Carnot group. Actually,  the papers \cite{CCLDO} and \cite{CM} show extensions beyond the group setting, but within the step two hypothesis. The $C^{1,\alpha}$-regularity for equation (\ref{maineq}) in a general Carnot group is completely open. Even the Lipschitz regularity is not known in this case. We refer to \cite{CCLDO} for the connect between the regularity of equation (\ref{maineq}) and 1-quasiregularity in the case $p=2n+2$. We do believe that the Cacciopoli inequality in Theorem \ref{thm:Cacci1} still holds in this setting. But it seems that the approach in \cite{Zhong} does not work and it requires new ideas to prove such a Cacciopoli inequality.

(ii) {\it Regularity for $p$-Laplacian system in the Heisenberg group}. We consider the $p$-Laplacian system 
\begin{equation}\label{plaps} 
\sum_{i=1}^{2n} X_i  \Bigg( |\nabla_0 u|^{p-2} X_i u \Bigg)=0, 
\end{equation}
where $u$ is a vector-valued function defined a domain of $\Hn$.
The $C^{1,\alpha}$-regularity for this equation is not known. In the Euclidean setting, it is known that solutions are in the class  $C^{1,\alpha}$ \cite{U,Tolksdorff}.

(iii) {\it Singular $p$-harmonic functions in the Heisenberg group}. Let $\Omega$ be an open subset of $\mathbb{R}^n$ such that $0\in \Omega$. Let $u$ be a $p$-harmonic function in $\Omega\setminus \{0\}$ for $1<p\le n$. Serrin \cite{Se, Se2} showed that if $u$ is bounded from below and not bounded from above in  $\Omega\setminus\{0\}$, then there is a constant $c\ge 1$ such that
\[
\frac{1}{c} u_0\le u \le c u_0
\]
in a neighbourhood of the origin, where $u_0$ is, up to a constant, the fundamental solution to the $p$-Laplacian equation (\ref{plap}) 
\begin{equation*}
u_0(x)=\begin{cases} |x|^{(p-n)/(p-1)} \quad &\text{ if } 1<p<n;\\
\log (1/|x|), &\text{ if } p=n.
\end{cases}
\end{equation*}
In \cite{KV}, Kichenassamy and Veron improved Serrin's result and showed that there is a constant $\gamma$
such that
\[
u-\gamma u_0\in L^\infty_\loc (\Omega),
\]
and that
\[
\lim_{x\to 0}|x|^{(n-1)/(p-1)}(\nabla u(x)-\gamma \nabla u_0(x))=0.
\]
Naturally we ask if similar results hold for
the singular $p$-harmonic functions in the Heisenberg group. The fundamental solution to equation (\ref{plap})  has an explicit form in the Heisenberg group and in H-type groups, see Theorem 2.1 of \cite{CDG}. A new difficulty in this setting is that the horizontal gradient of such fundamental solution vanishes on the center of the group. This prevents one from using the existing strong comparison theorems for $p$-harmonic functions. We remind the readers that the strong comparison theorem for $p$-harmonic function is still open even in the Euclidean spaces $\mathbb R^n$, for $n\ge 3$ (the planar case was established by Manfredi in \cite{Manfredi-comp}).

(iv) {\it Regularity for equation (\ref{maineq-zero}) in the Heisenberg group}.
For the non-stationary case, we know that the weak solutions to equation (\ref{maineq-zero}) in the Heisenberg group, or in any step two Carnot group, are Lipschitz continuous in the case $2\le p\le 4$. This is the only existing regularity result.
The $C^{1,\alpha}$-regularity is open even in the case $2<p\le 4$. We naturally ask if we have similar regularity theory for equation (\ref{maineq-zero}) for the full range $1<p<\infty$ in the Heisenberg group, or more generally in Carnot groups, as that in the setting of Euclidean spaces. Our understanding on equation (\ref{maineq-zero}) is limited. We need new ideas to handle this equation. For example, to prove the Lipschitz continuity of the solutions to equation (\ref{maineq-zero}), we do believe that Theorem \ref{CCGMt2.8} holds for all $1<p<\infty$. 
But, if we would mimic the proof in the stationary case, we would end up with a term, coming from $\p_t u$, for which the existing techniques do not seem to apply directly.

(v) {\it Homogenous parabolic $p-$Laplacian}. It would be interesting to study the equation
$$\partial_t u = \sum_{i=1}^{2n} X_i^2 u + (p-2) \frac{ X_i u X_j u}{|\nabla_0 u|^2}X_iX_j u,$$
and try to recover the results in \cite{JSi}. The study of this type of homogenous $p-$Laplacian seems to be completely open in the subriemannian setting.

\end{document}